\newtheorem{thm}{Theorem} 
\newtheorem{cor}{Corollary} 
\newtheorem{prop}{Proposition}
\newtheorem{dfn}{Definition}
\newtheorem{rem}{Remark}
\newcommand{\mathe}{\mathrm{e}}
\newcommand{\EXP}{\ensuremath{\mathbb{E}}}
\newcommand{\IND}{\mathbb{I}}
\newcommand{\KT}{\textsc{kt}}
\newcommand{\Np}{\ensuremath{\mathbb{N}_+}}
\newcommand{\N}{\ensuremath{\mathbb{N}}}
\newcommand{\PROB}{\ensuremath{\mathbb{P}}}
\newcommand{\M}{\ensuremath{\mathfrak M}}
\begin{document}
\title{ 
   About Adaptive Coding on Countable  Alphabets\IEEEauthorrefmark{4}\thanks{\IEEEauthorrefmark{4}The material
     in this paper was presented in part at the 
$23^{\text{rd}}$ Intern. Meeting on Probabilistic, Combinatorial, and Asymptotic Methods for the Analysis of Algorithms (AofA'12), Montr\'eal, Qu\'ebec,
Canada, June 2012}}

\author{
\IEEEauthorblockN{Dominique Bontemps\IEEEauthorrefmark{3}\thanks{\IEEEauthorrefmark{3}supported by Institut de Math\'ematiques de Toulouse, Universit\'e de Toulouse} \and  St\'ephane Boucheron\IEEEauthorrefmark{1}\thanks{\IEEEauthorrefmark{1}supported by Network of Excellence \textsc{pascal ii},  Laboratoire de Probabi\-lit\'es et Mod\`eles Al\'eatoires, Universit\'e Paris-Diderot} \and Elisabeth Gassiat\IEEEauthorrefmark{2}\thanks{\IEEEauthorrefmark{2}supported by Network of Excellence \textsc{pascal ii}, Laboratoire de Math\'ematiques d'Orsay, Universit\'e Paris-Sud}
}
}



\maketitle

\begin{abstract}
This paper sheds light on  adaptive  coding with respect to 
classes of memoryless sources over a countable alphabet defined by an envelope function  
with  finite and non-decreasing hazard rate (log-concave envelope distributions). 
We prove that the auto-censuring \textsc{(ac)} code introduced by \citet{Bon11} is adaptive with respect to 
the collection of such classes. The analysis builds on the tight characterization 
of universal redundancy rate in terms of metric entropy   
by \citet{MR1604481}
and on a careful analysis of the performance of the \textsc{ac}-coding algorithm. The latter relies on non-asymptotic  bounds 
for maxima of samples from discrete distributions with finite and non-decreasing hazard rate. 
\end{abstract}

{\it{Index Terms}}---countable alphabets, redundancy, adaptive compression,
minimax. 
\bibliographystyle{IEEEtranSN} 
\section{Introduction}
\label{sec:introduction}

\subsection{From universal coding to adaptive coding}
\label{sec:universal}

This paper is concerned with   problems of \emph{adaptive} that is
\emph{twice universal} or \emph{hierarchical universal} coding over a countable
 alphabet $\mathcal{X}$ 
(say the set of positive integers $\Np$ or
the set of integers $\N$). 
Sources over  alphabet $\mathcal{X}$ 
are probability distributions on the set $\mathcal{X}^\N$ of infinite sequences of symbols from $\mathcal{X}$. 
In this paper, the symbol $\Lambda$ will be used to  denote various collections of sources on  alphabet $\mathcal{X}.$ 
The symbols emitted by a source are denoted by a sequence $\mathbf{X}$ of 
$\mathcal{X}$-valued random
variable $\mathbf{X}=\left(X_n\right)_{n\in\N}. $ 
If $\PROB$  is the distribution of $\mathbf{X},$ $\PROB^n$ denotes the
distribution of the first $n$ symbols $X_{1:n}=(X_1,...,X_n),$ and we 
let $\Lambda^n=\{\PROB^n : \PROB \in\Lambda\}$.

Throughout the paper, we will rely on the correspondence between
non-ambiguous codes and probability distributions and refer to codes through coding
probabilities \citep[see][for a gentle introduction to the notion of coding probability]{cover:thomas:1991}. 
The \emph{expected redundancy} of any (coding) distribution $Q^n\in
 {\mathfrak{M}}_1({\mathcal{X}}^n)$ with respect to $\PROB$ is equal to
the  Kullback-Leibler divergence (or relative entropy)  between
$\PROB^n$  and $Q^n$, 
\begin{eqnarray*}
D(\PROB^n,
 Q^n) &=& \sum_{\mathbf{x}\in {\cal X}^n} \PROB^n\{\mathbf{x}\} 
 \log \frac{\PROB^n(\mathbf{x})}{Q^n(\mathbf{x})} \\
& = &\EXP_{\PROB^n}\left[\log 
 \frac{\PROB^n(X_{1:n})}{Q^n(X_{1:n})}\right] \, .
\end{eqnarray*} Up to a constant, the expected redundancy of $Q^n$ with respect to
$\PROB$ is the expected difference between the length of codewords
defined by encoding messages as if they were produced by $Q^n$  and
the ideal codeword length when encoding messages produced by
$\PROB^n.$ In the language of mathematical statistics, it is also the
\emph{cumulative entropy risk} suffered by estimating $\PROB^n$ using $Q^n.$

 Notice that the definition of redundancy uses base $2$ logarithms. Throughout this text, $\log x$ denotes the base $2$ logarithm of $x$ while $\ln x$ denotes its natural logarithm.

Universal coding attempts to develop sequences of coding probabilities $(Q^n)_n$  so as to
minimize expected redundancy over a whole known class of sources. The
\emph{maximal redundancy}  of coding probability $Q^n$ with respect to
source class $\Lambda$ is
defined by
$$R^+(Q^n, \Lambda^n)=\sup_{\PROB\in\Lambda} D(\PROB^n, Q^n) \, . $$ 
The infimum  of $R^+(Q^n,\Lambda^n)$ is called the \emph{minimax
  redundancy} with respect to $\Lambda$,  
$$R^+(\Lambda^n)=\inf_{Q^n\in {\M}_1\left({\mathcal{X}}^n\right)}
R^+(Q^n,\Lambda^n).$$ 

As the design of almost minimax coding probabilities is usually not a
trivial task, looking for an apparently more ambitious goal,
\emph{adaptivity}, may seem preposterous. Indeed,  
whereas universality issues are ubiquitous in lossless coding theory \citep{csiszar:korner:1981,cover:thomas:1991},
and adaptivity has been  a central concept in Statistics
during the last two decades    \citep[See][and
references
therein]{MR1623559,barron:birge:massart:1999,donoho:johnstone:1994,donoho:Johnstone:Kerkyacharian:picard:1996,abramovich:benjamini:donoho:johnstone:2000,tsybakov:2004},
the very word  adaptivity barely made its way in the lexicon of
Information Theory. Nevertheless, adaptivity issues have been
addressed in
coding theory, sometimes using different expressions to name things. Adaptive coding is sometimes called twice universal
coding \citep{Rya1984,MR1101099,MR1895085,MR2451044} or hierarchical
universal coding  \citep{MeFe98}. We pursue this endeavour. 

A sequence $(Q^n)_{n}$  of coding probabilities   is said
to be \emph{asymptotically adaptive} with respect to a collection
$(\Lambda_m)_{m\in \mathcal{M}}$ of  source classes if for all $m\in
\mathcal{M}$,  
\begin{eqnarray*}
R^+(Q^n, \Lambda_m^n)&= &\sup_{\PROB \in \Lambda_m} D(\PROB^n,Q^n) \leq (1+o_m(1))R^+(\Lambda_m^n) 
\end{eqnarray*}
as $n$ tends to infinity. In words, a sequence of  coding
probabilities is adaptive with respect to a collection of source
classes if it asymptotically achieves minimax redundancy over all
classes. Note that this is a kind of first order requirement, the
$o_m(1)$ term may tend to $0$ at a rate that depends on the source
class $\Lambda_m$. 

This is not the only way of defining
adaptive compression, more stringent definitions are possible
\citep[See][Section 1.5]{catoni:2004}. This last reference 
describes oracle inequalities for the context-tree weighting method
\citep{Wil98},
 a successful attempt to achieve adaptivity with respect to an infinite collection of
source classes on a finite alphabet indexed by their memory structure \citep{catoni:2004}.

The present paper 
describes another successful attempt to achieve adaptivity with
respect to an infinite collection of source classes on a countable alphabet.

\subsection{Adaptive coding with respect to a collection of envelope classes}
\label{sec:envelope}

Pioneering results by \cite*{MR514346}, \citeauthor*{gyorfi1993uns} (\citeyear{gyorfi1993uns,MR1281931}) show  
that, as soon as the alphabet is infinite,  finite minimax redundancy,
that is the possibility of achieving universality,  is not a trivial property even for classes of memoryless sources. 
\begin{prop} \label{prop:kieffer}
If a class $\Lambda$ of stationary sources over a countable alphabet ${\mathcal{X}}$
has finite \emph{minimax redundancy}
then  there exists a probability distribution 
 $Q$ over $\mathcal{X}$ such that for every
$\PROB\in\Lambda$ with $\lim_n H(\PROB^n)/n <\infty$ where $H(\PROB^n)=\sum_{\mathbf{x} \in \mathcal{X} } -\PROB^n(\mathbf{x})\log \PROB^n(\mathbf{x})$
(finite Shannon entropy rate), $Q$ 
satisfies   $D(\PROB^1,Q)<\infty .$
\end{prop}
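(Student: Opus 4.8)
The plan is to read the conclusion as a statement about block length one: finiteness of the minimax redundancy will be used to produce, out of a near-optimal block coding distribution, a single distribution $Q\in\M_1(\mathcal X)$ whose one-symbol redundancy against every $\PROB^1$, $\PROB\in\Lambda$, is finite. The only tools needed are the data-processing (coarse-graining) inequality for relative entropy, stationarity, and convexity of $D$ in its second argument.

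Concretely, first I would fix a block length $n$ with $R^+(\Lambda^n)<\infty$ — under the hypothesis any $n$ works, and indeed $n=1$ already suffices since marginalizing a good length-$n$ coder and applying data processing shows $R^+(\Lambda^1)\le R^+(\Lambda^n)$ — and pick $Q^n\in\M_1(\mathcal X^n)$ with $R^+(Q^n,\Lambda^n)\le R^+(\Lambda^n)+1=:C<\infty$, so that $D(\PROB^n,Q^n)\le C$ for all $\PROB\in\Lambda$. Writing $Q^n_i$ for the law of the $i$-th coordinate under $Q^n$, the map $\mathbf x\mapsto x_i$ is deterministic, hence $D(\PROB^n_i,Q^n_i)\le D(\PROB^n,Q^n)\le C$; since $\PROB$ is stationary $\PROB^n_i=\PROB^1$, so $D(\PROB^1,Q^n_i)\le C$ for every $i\le n$ and every $\PROB\in\Lambda$. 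Setting
\[
Q:=\frac1n\sum_{i=1}^n Q^n_i\in\M_1(\mathcal X),
\]
convexity of $D(\PROB^1,\cdot)$ yields $D(\PROB^1,Q)\le\frac1n\sum_{i=1}^n D(\PROB^1,Q^n_i)\le C<\infty$ for all $\PROB\in\Lambda$. (With $n=1$ there is no averaging and $Q$ is simply a near-minimax one-symbol coder.)

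Notice that this argument in fact delivers the uniform bound $\sup_{\PROB\in\Lambda}D(\PROB^1,Q)\le C$, with no restriction on the entropy rate. The finite-entropy-rate clause is what ties the statement to the cited weak-universality results in the converse direction: taking $Q^{\otimes n}$ as coder, the chain rule gives the identity $D(\PROB^n,Q^{\otimes n})=nD(\PROB^1,Q)+nH(\PROB^1)-H(\PROB^n)$, equivalently $\tfrac1n D(\PROB^n,Q^{\otimes n})=D(\PROB^1,Q)+\bigl(H(\PROB^1)-\tfrac1n H(\PROB^n)\bigr)$, and the bracket is non-negative and converges to $H(\PROB^1)-\bar H(\PROB)$ precisely when the entropy rate $\bar H(\PROB)=\lim_n H(\PROB^n)/n$ is finite; only then can one freely pass between finiteness of $D(\PROB^1,Q)$ and control of the per-symbol redundancy. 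I expect the sole genuine obstacle to be the bookkeeping of possibly infinite quantities: the data-processing and convexity steps must be read as inequalities in $[0,+\infty]$, and for the chain-rule identity one should first restrict to $\PROB$ with finite entropy rate (so that $H(\PROB^1)<\infty$) in order to make the decomposition an equality of finite numbers.
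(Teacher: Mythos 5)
Your argument is correct: with the paper's definition of minimax redundancy, finiteness of $R^+(\Lambda^n)$ for some $n\geq 1$ gives a $Q^n$ with $\sup_{\PROB\in\Lambda}D(\PROB^n,Q^n)\leq C<\infty$, and coarse-graining to a coordinate marginal plus stationarity (and, if one wishes, averaging and convexity, though the first-coordinate marginal alone already suffices) yields a single $Q\in\M_1(\mathcal{X})$ with $\sup_{\PROB\in\Lambda}D(\PROB^1,Q)\leq C$, which is even stronger than the stated conclusion. The paper itself offers no proof: it presents the proposition as a corollary of the weak-universality results of Kieffer and of Gy\"orfi, P\'ali and van der Meulen, whose hypotheses are genuinely weaker (per-symbol redundancy $D(\PROB^n,Q^n)/n\to 0$ for each $\PROB$, with no uniformity over the class); in that setting one cannot extract a bounded one-letter redundancy from a single block length, and the finite-entropy-rate restriction together with mixture constructions over block lengths become essential. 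Your route is thus more elementary and self-contained, and it explains in passing why the entropy-rate clause is vacuous under the strong (minimax) hypothesis, whereas the cited results buy the statement under the weaker weak-universality hypothesis where that clause is really needed. Two small points of care: read ``finite minimax redundancy'' as $R^+(\Lambda^n)<\infty$ for some (equivalently every) $n\geq 1$, exactly as you did when handling the possibly non-attained infimum; and in your closing aside, note that a finite entropy rate does not by itself force $H(\PROB^1)<\infty$ (subadditivity gives $H(\PROB^n)/n\leq H(\PROB^1)$, not the reverse), so the chain-rule decomposition $\tfrac1n D(\PROB^n,Q^{\otimes n})=D(\PROB^1,Q)+H(\PROB^1)-\tfrac1n H(\PROB^n)$ needs $H(\PROB^1)<\infty$ to be an identity of finite numbers---this does not affect your proof of the proposition, which nowhere uses that identity.
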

This observation contrasts with what we know about the finite alphabet setting where 
coding probabilities asymptotically achieving minimax redundancies have been described
\citep{xie:barron:2000,barron:rissanen:yu:1998,yang:barron:1998,xie:barron:1997,barron:clarke:1994}. This
even contrasts with recent  delicate asymptotic results for coding over large finite alphabets with unknown size \citep{SzpWei10,YanBar13}. 

This  prompted  \cite*{boucheron:garivier:gassiat:2006} to investigate
the redundancy of specific memoryless source classes, namely 
classes defined by an \emph{envelope} function. 

\begin{dfn}\label{dfn:envelope:class}
Let  $f$ be  a mapping from  $\Np$ to $[0,1],$ with $1\leq \sum_{j>0} f(j)<\infty$.  
The \emph{envelope class} $\Lambda_f$ defined by the function $f$   is 
the collection of stationary  memoryless sources with first marginal
distribution dominated by $f$,  
\begin{eqnarray*}
\Lambda_f 
& =&\Bigl\{ \PROB~:~~\forall x\in {\Np},\;\PROB^1\{x\}\leq
f(x)~, \Bigr. 
\\
& & \Bigl. \text{ and } \PROB \text{ is stationary and memoryless.}
\Bigr\}\, . 
\end{eqnarray*}
\end{dfn}
An envelope function defines an \emph{envelope distribution}.  The minimax
redundancy  
of the source classes we are interested in is, up to the first order, asymptotically
determined by  the tail behavior of the envelope distribution. 
\begin{dfn}\label{def:env:distribution} Let $f$ be an  envelope function.
The associated \emph{envelope distribution} has lower endpoint 
 $l_f = \max \{k\colon \sum_{j\geq k} f(j)\geq 1\}$. The envelope distribution $F$
 is  defined by 
$F(k)=0$ for $k<l_f,$ and $F(k)= 1- \sum_{j>k} f(j)$ for $k\geq l_f$. 
The tail function $\overline{F}$ is defined by $\overline{F}=1-F$.   
The associated probability mass function coincides with $f$ for  $u>l_f$ and 
is equal to  $F(l_f)\leq f(l_f)$ at $u=l_f$. 
\end{dfn}
This envelope probability distribution 
plays a special role  in the analysis of   the minimax redundancy  $R^+(\Lambda^n_f)$.
\cite*{boucheron:garivier:gassiat:2006} related the summability of the envelope function and the minimax redundancy of the envelope class.
They  proved almost matching 
upper  and lower bounds on minimax redundancy
for  envelope classes. The next theorem provides an upper-bound on the minimax redundancy
of envelope classes and suggests  general design principles for
universal coding over envelope classes and for adaptive coding over a
collection of envelope classes. 
\begin{thm}  \citep*{boucheron:garivier:gassiat:2006}
\label{prop:upperbound}
If $\Lambda$ is an envelope  class of memoryless sources, 
with  the tail envelope function
$\overline{F}$ 
then:
\begin{displaymath}
R^+(\Lambda^n) \leq \inf_{u : u\leq n} \, \left[ n
  \overline{F}(u)\log e + \frac{u-1}{2}\log n 
  \right] + 2\, .
\end{displaymath}
\end{thm}
If the envelope $F$ is known, if the message length $n$ is known, the
following strategy is natural: determine $u$ such that
$\overline{F}(u) \approx \frac{1}{n}$; choose a good universal coding
probability for  memoryless sources over alphabet $\{0,\ldots,u\}$;
escape symbols larger than $u$ using $0$ which does not belong to the
source alphabet;  encode the escaped sequence using the good universal
coding probability; encode all symbols larger than $u$ using a coding
probability tailored to the envelope distribution. If the upper bound
is tight, this strategy should achieve the minimax redundancy
rate. If the message length is not known in advance, using a doubling 
technique should allow to derive an online extension. As naive as this
approach may look, it has already proved fruitful. \citet{Bon11}
showed that 
the minimax redundancy of classes defined by exponentially vanishing
envelopes is half the upper bound obtained by choosing $u$ so as 
$\overline{F}(u) \approx 1/n$.

If we face a collection of possible envelope classes and the envelope
is not known in advance, we face two difficulties: there is no obvious
way to guess a reasonable threshold; once a threshold is chosen, there
is no obvious way to choose a coding probability for escaped symbols.


There are reasons to be optimistic. 
Almost adaptive coding techniques  for the collection of source classes defined 
by power law  envelopes were introduced in \citep*{boucheron:garivier:gassiat:2006}.  
Moreover, \citeauthor{Bon11} designed and analyzed the
\textsc{ac}-code (Auto-Censuring code) (described in Section
\ref{sec:ac-code}) and proved that this simple computationally
efficient online code is adaptive over the union of 
classes of sources  with exponentially decreasing envelopes (see
Definition \ref{dfn:subexp}). 
As the \textsc{ac}-code does not benefit from any side information concerning the envelope, it is natural to ask whether it is adaptive to a larger class of sources. That kind of question has been addressed in data compression by \citet{Gar06} who proved that Context-Tree-Weighting \citep{Wil98,catoni:2004}  is 
adaptive over renewal sources \citep{csiszar:shields:1996} while it
had been designed to compress sources with bounded memory. 
In a broader context, investigating the situations where an appealing procedure is minimax motivates the \emph{maxiset} approach pioneered in \citep{MR1848302,KePi02}.   

\subsection{Roadmap}
\label{sec:answers}

This paper shows that the \textsc{ac}-code is adaptive over the
collection of envelope classes that lie between the exponential envelope classes investigated
 in \citep{boucheron:garivier:gassiat:2006,Bon11} 
and the classes of sources with finite alphabets (Theorem
\ref{thm:main-result-new}). The relevant envelopes are characterized
by the fact that they have non-decreasing hazard rate (see Section
\ref{sec:hazard}). This distributional property implies that the
corresponding envelope
distributions fit nicely in the framework of extreme value theory (See
Section \ref{sec:slow-variation}), smoothed version of the envelope
distribution belong to the so-called Gumbel domain of attraction, and
this implies strong concentration properties for maxima of i.i.d. samples
distributed according to envelope distributions. 
As the \textsc{ac}-code uses
 mixture coding over the \emph{observed alphabet} in a sequential way,
the  intuition provided by Theorem \ref{prop:upperbound} suggests that the \textsc{ac}-code should perform well when
 the largest symbol in a message of length $n$ is close to the
 quantile of order $1-1/n$ of the envelope distribution. This
 concentration property is a consequence of the non-decreasing hazard
 rate assumption \citep{boutho2012}. Moreover we check in the Appendix (see Section \ref{sec:numb-dist-symb})
that if the sampling distribution has the non-decreasing hazard rate
property, on average, the size of the largest symbol and the number of distinct
symbols in the sample differ by a constant.

The non-decreasing hazard  rate assumption has far reaching
implications concerning the slow variation property of the quantile
function of the envelope distribution (Section
\ref{sec:slow-variation}) that prove instrumental in the derivation of
matching lower bounds for the minimax redundancy of the corresponding
envelope classes. In Section \ref{sec:minimax-redundancies}, we
revisit the powerful results concerning extensions of minimax
redundancy  by \citet{MR1604481}. Advanced results from regular
variation theory shed new light on the small classes where the lower
bounds from \citep{MR1604481} are known to be tight 

In words, borrowing ideas from extreme value theory \citep{FalHusRei11,HaaFei06,MR2108013,Res87}, we prove that 
if the envelope distribution function has finite and non decreasing hazard rate  (defined in Section~\ref{sec:hazard}): 
i) an explicit formula connects  the minimax redundancy and the envelope distribution;
ii)  the \textsc{ac}-code asymptotically achieves the minimax redundancy, that is the \textsc{ac}-code
 is adaptive with respect to the collection of envelope classes with finite and non decreasing hazard rate.

The paper is organized as follows. Section~\ref{sec:ac-code} describes
the  \textsc{ac}-code. 
Section~\ref{sec:hazard} provides  notation and definitions concerning hazard rates.
The main result concerning the adaptivity of 
the \textsc{ac}-code over classes with  envelopes with finite and non-decreasing hazard rate 
is stated in Section~\ref{sec:main-result}. The minimax redundancy of source classes defined by envelopes with finite and non-decreasing hazard rate 
 is characterized in Section~\ref{sec:minimax-redundancies}. Section~\ref{sec:redundancy-ac-code} is dedicated 
to the characterization of the redundancy of the \textsc{ac}-code over source classes  defined by envelopes with finite and non-decreasing hazard rate. 
 
\section{The AC-code}
\label{sec:ac-code}

The \textsc{ac}-code encodes a sequence $x_{1:n}= x_1,\ldots,x_n$ of symbols from $\mathbb{N}_+= \mathbb{N}\setminus \{0\}$ 
in the following way. For $i \colon 1\leq i\leq n,$
let $m_i=\max_{1\leq j\leq i} x_j.$ The $i^{\text{th}}$ symbol is a \emph{record} if $m_i\neq m_{i-1}.$ Let $n^0_i$  be the number of records up to index 
$i$. The $j^{\text{th}}$  record is denoted by $\widetilde{m}_j$. From the definitions, $\widetilde{m}_{n^0_i}= m_i$ for all $i.$
Let $\widetilde{m}_0=0$ and let $\widetilde{\mathbf{m}}$ be derived
from the sequence of differences between records and  terminated by a
$1$, 
$\widetilde{\mathbf{m}}=(\widetilde{m}_{i}
-\widetilde{m}_{i-1}+1)_{1\leq i\leq n^0_n}1$. The last $1$ in the sequence serves as a terminating symbol.
 The symbols in  $\widetilde{\mathbf{m}}$ are encoded 
using Elias penultimate code \citep{MR0373753}. This sequence of codewords forms $C_E$.  The sequence of censored symbols $\widetilde{x}_{1:n}$  is defined by
$\widetilde{x}_i = x_i  \IND_{x_i\leq m_{i-1}}$. The binary string
$C_M$ is obtained by arithmetic encoding of  $\widetilde{x}_{1:n}0.$ 
\begin{rem} Let  $x_{1:n} \in \Np^n$  be 
$$
\emph{5} \, \emph{15} \,\,  8 \,  1 \, \emph{30} \, \,  7 \,  1  \,  2  \,  1  \,  8 \,  4 \,   7 \,  15 \,\,   1  \,  5 \,  17 \,  13\, \,
4 \, 12 \,\, 12 \, ,
$$ (records are italicized) then 
$m_{1:n}$ is 
$$
5\, 15\, 15\, 15\, 30\, 30\, 30\, 30\, 30\, 30\, 30\, 30\, 30\, 30\, 30\, 30\, 30\, 30\, 30\, 30
$$
 and $\tilde{x}_{1:n}0$  is parsed into $4$ substrings terminated by $\emph{0}$
$$
\underbrace{\emph{0}}_{i}\, \underbrace{\emph{0}}_{ii}\, \, \underbrace{8\, \, 1\, \, \emph{0}}_{iii}\, \, \underbrace{7\, \, 1\, \, 2\, \, 1\, \, 8\, \, 4\, \, 7\, 15\, \, 1\, \, 5\, 17\, 13\,\,
4\, 12\,\, 12\,\,  \emph{0}}_{iv}
$$
while $\tilde{m}$  is 
$
6\,  11\,\,  16\,  \, \emph{1}
$.
\end{rem}

The coding probability used to (arithmetically) encode $\widetilde{x}_{1:n}0$ is
\begin{displaymath}
  Q^{n+1} (\widetilde{x}_{1:n}0) = Q_{n+1}(0\mid x_{1:n}) \prod_{i=0}^{n-1} Q_{i+1} (\widetilde{x}_{i+1}\mid x_{1:i}) \, . 
\end{displaymath}
with 
\begin{displaymath}
  Q_{i+1} \left(\widetilde{X}_{i+1} = j \mid X_{1:i}=x_{1:i} \right) =  \frac{n^j_i+\tfrac{1}{2}}{i+\tfrac{m_i+1}{2}}
\end{displaymath}
where $n_i^j$ is the number of occurrences of symbol $j$ amongst the first $i$ symbols (in $x_{1:i}$). We agree on $n^j_0=0$ 
for all $j>0$. If $i<n,$ the event $\{ \widetilde{X}_{i+1}=0\}=\{X_{i+1}=M_{i+1}>M_i\}$ has conditional probability 
$Q_{i+1} \bigl(\widetilde{X}_{i+1} = 0\mid X_{1:i}=x_{1:i} \bigr) =  \tfrac{{1}/{2}}{i+{(m_i+1)}/{2}}.$ 
Note that $0$ is always encoded as a new symbol: if $x_{i+1}=j>m_i$, the \textsc{ac}-code encodes a $0$, but $n_i^j$ rather than $n_i^0$ is incremented. 

In words, the mixture code consists of progressively enlarging the alphabet and feeding an arithmetic coder  with Krichevsky-Trofimov mixtures 
over the smallest alphabet seen so far \citep[See][for a gentle
introduction to Krichevsky-Trofimov
mixtures]{cesa-bianchi:lugosi:2006}. 

\citet{Bon11} describes a nice simple way of  interleaving the Elias
codewords and the mixture code in order to perform \emph{online}
encoding and decoding. Substrings of $\tilde{x}_{1:n}0$ terminated by
$0$ are fed online to an arithmetic coder over the relevant alphabet
using properly adapted Krichevsky-Trofimov mixtures, after each $0$, 
the corresponding symbol from $\tilde{m}$ is encoded using the
self-delimited Elias code and transmitted. The alphabet used to encode
the next substring of    $\tilde{x}_{1:n}0$ is enlarged and the
procedure is iterated. The last symbol of $\tilde{m}$ is~a~$1$, it
signals the end of the message.


 \section{Hazard rate and envelope distribution}
 \label{sec:hazard}

The envelope distributions we consider in this paper are characterized  
by the behavior of their hazard function $n \mapsto -\ln
\overline{F}(n)$. The probabilistic analysis of the performance of the
\textsc{ac}-code  borrows tools and ideas from extreme value
theory. As the theory of extremes for light-tailed discrete random
variables is plagued by interesting but distracting paradoxes, 
following \citet{And70}, it proves convenient to define a continuous distribution function starting 
from the envelope distribution function $F$. This continuous
distribution function will be called the \emph{smoothed envelope
distribution}, it coincides with the envelope distribution on
$\mathbb{N}$. Its hazard function is defined by linear interpolation:
the \emph{hazard rate},  that is the derivative of the hazard function is
well-defined on $\mathbb{R}_+ \setminus \mathbb{N}$ and it is piecewise
constant. The envelope distributions we consider here are such that
this hazard rate  is non-decreasing and finite.
The essential infimum of the hazard rate is~$b=-\ln \overline{F}(l_f)>0.$ 
Notice that the hazard rate is finite on $ [l_f -1,\infty)$ if and only if $f$ has infinite support. 

We will also repeatedly deal with the quantile function of the
envelope distribution and even more often with the quantile function
of the smoothed envelope distribution. As the latter is continuous and
strictly increasing over its support, the quantile function of the
smooth envelope distribution is just the inverse function of the smooth
envelope distribution. The quantile function of the piecewise constant
envelope distribution is the left continuous generalized inverse:
\begin{displaymath}
  F^{-1}(p) =  \inf\{ k \colon k \in \mathbb{N},  F(k)\geq p\} \, . 
\end{displaymath}
If the hazard rate is finite, then $\lim_{p\to 1} F^{-1}(p) =\infty$. 
 Note that the smoothed envelope distribution has support $[l_f-1,
 \infty)$. 
Recall that if $X$  is distributed according to the smoothed envelope
distribution  $\lfloor X\rfloor +1$ and  $\lceil X\rceil$
are distributed according to the envelope distribution. 


\begin{rem}
  Assume that $F$  is a shifted geometric distribution: for some $l\in
  \Np$, some $q\in (0,1)$, for all $k\geq l$, $\overline{F}(k)
  =(1-q)^{k-l}$, so that the hazard function is $(k-l)\ln 1/(1-q)$ for
  $k\geq l$. The corresponding smooth distribution is the shifted
  exponential distribution with tail function $t\mapsto (1-q)^{t-l}$ for $t>l$. 
\end{rem}

The  envelopes introduced in the next definition provide examples where the associated continuous distribution 
function has non-decreasing hazard rate. Poisson distributions offer other examples. 
\begin{dfn}\label{dfn:subexp}
 The \emph{sub-exponential envelope
  class} with  parameters $\alpha\geq 1$ (shape),  $\beta>0$ (scale)  and $\gamma>1$   is the set $\Lambda
  (\alpha,\beta,\gamma)$ of probability mass functions $(p(k))_{k\geq 1}$   on the positive integers such that
$$
\forall k\geq 1,\; p(k)\leq f(k),\; \text{ where } f(k)=\gamma e^{ -\big(\tfrac{k}{\beta}\big)^{\alpha}}.
$$ 
\end{dfn}
Exponentially vanishing envelopes \citep{boucheron:garivier:gassiat:2006,Bon11}  are obtained by fixing $\alpha=1.$


\section{Main result}
\label{sec:main-result}

In the text, we will repeatedly use the next shorthand for the
quantile of order $1-1/t, t>1$ of the smoothed envelope
distribution. The function  
$U\colon[1,\infty) \to \mathbb{R}$ is defined by
\begin{equation}
U(t) =  F_s^{-1}(1-1/t)\label{eq:4}
\end{equation}
where $F_s$ is the smoothed envelope distribution. 
 
The main result may be phrased as follows. 
\begin{thm}\label{thm:main-result-new}
The \textsc{ac}-code is adaptive with respect to source classes defined by 
envelopes with finite and non-decreasing hazard rate. 

Let $Q^n$ be the coding probability associated with the \textsc{ac}-code, then  
if $f$ is an envelope with non-decreasing hazard rate,  and
$U\colon[1,\infty) \to \mathbb{R}$ is defined by \eqref{eq:4}, then 
\begin{displaymath}
R^+(Q^n; \Lambda^n_f) \leq (1+o_f(1)) (\log e) \int_1^n \frac{U(x)}{2x} \mathrm{d} x \, 
\end{displaymath}while
\begin{displaymath}
R^+(\Lambda_f^n) \geq  (1+o_f(1)) (\log e) \int_1^n \frac{U(x)}{2x} \mathrm{d} x \,   
\end{displaymath}
as $n$ tends to infinity.
\end{thm}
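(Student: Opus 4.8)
The plan is to establish the theorem as the combination of two independent statements: an upper bound on the redundancy of the AC-code, and a matching (up to a $(1+o(1))$ factor) lower bound on the minimax redundancy $R^+(\Lambda_f^n)$. Since $R^+(Q^n;\Lambda_f^n)\ge R^+(\Lambda_f^n)$ always holds, proving the displayed upper bound for the AC-code together with a lower bound on $R^+(\Lambda_f^n)$ of the same order forces all three quantities to coincide asymptotically, which is exactly the stated adaptivity.

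For the \emph{lower bound} on $R^+(\Lambda_f^n)$, I would invoke the characterization of minimax redundancy via metric entropy of the marginal class due to \citet{MR1604481}, in the same spirit as \citet{Bon11} handled the exponential case. The idea is that, since the hazard rate is non-decreasing, the envelope distribution's effective support grows like $U(n)\sim U_c(n)$ when one observes $n$ symbols, and the class of marginals restricted to this support behaves like a parametric family of dimension $\approx U_c(n)$; summing the per-coordinate contributions $\tfrac12\log(\cdot)$ leads to the integral $\int_1^n \tfrac{U_c(x)}{2x}\,\mathd x$ after the substitution that turns the dimension count into the occupancy of records. Concretely, one splits the alphabet at a threshold $u$, lower-bounds redundancy on the "small" alphabet $\{1,\dots,u\}$ by the classical $\tfrac{u-1}{2}\log n$-type term, optimizes over $u$ — the optimal $u$ being essentially $U_c(n)$ because that is where $\overline F_c(u)\approx 1/n$ — and then refines the optimization into the integral form by a layer-cake / telescoping argument over dyadic scales of $n$. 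The non-decreasing hazard rate hypothesis is what guarantees this optimization is tight rather than merely order-correct: it controls the regular variation of $U_c$ and rules out oscillatory envelopes.

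For the \emph{upper bound} on the AC-code, I would decompose its codelength into the three contributions visible in Section~\ref{sec:ac-code}: (a) the Elias cost $|C_E|$ of encoding the record differences $\widetilde{\mathbf m}$; (b) the Krichevsky--Trofimov mixture cost of the censored stream $\widetilde x_{1:n}$ over the running alphabet $\{1,\dots,m_i\}$; and (c) lower-order terminating-symbol costs. The KT-mixture term is the dominant one: for a source $\PROB\in\Lambda_f$, a standard KT redundancy bound over an alphabet of size $m$ gives roughly $\tfrac{m-1}{2}\log(i)$ at step $i$, so in expectation one gets $\EXP_{\PROB}\!\big[\int \tfrac{M_x}{2x}\,\mathd x\big]$ up to constants, where $M_x$ is the running maximum. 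The crux is then to show $\EXP_{\PROB}[M_i]\le (1+o(1))\,U_c(i)$ uniformly over $\PROB\in\Lambda_f$, which is precisely the non-asymptotic control of maxima of samples from discrete distributions with finite, non-decreasing hazard rate advertised in the abstract: because $\PROB^1$ is dominated by $f$, $M_i$ is stochastically dominated by the maximum of $i$ i.i.d.\ draws from the envelope $F$, and the non-decreasing hazard rate puts this maximum in the Gumbel max-domain of attraction with normalizing constants governed by $U_c$, so $M_i/U_c(i)\to 1$ with enough uniform integrability to pass to expectations. The Elias term (a) must be shown to be negligible: the number of records up to $n$ is $O(\log n)$-ish and each record difference is $O(U_c(n))$, so $\EXP[|C_E|]=O(\log n \cdot \log U_c(n))$, which is $o$ of the main integral since the integral diverges faster (a quick check using $U_c(n)\to\infty$).

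The main obstacle I anticipate is the uniform passage from the high-probability statement $M_i\approx U_c(i)$ to the expectation bound $\EXP_{\PROB}\big[\int_1^n \tfrac{M_x}{2x}\,\mathd x\big]\le(1+o(1))(\log e)\int_1^n\tfrac{U_c(x)}{2x}\,\mathd x$, holding \emph{simultaneously for all} $\PROB\in\Lambda_f$. One must control the upper tail of $M_i$ beyond $U_c(i)$ well enough that its contribution to the integral is lower order — this is where the finiteness and monotonicity of the hazard rate are essential, since they yield sub-exponential-type tail bounds $\PROB\{M_i > (1+\varepsilon)U_c(i)\}$ that are summable and, crucially, uniform over the class because they depend only on the envelope $f$. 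A secondary subtlety is handling the symbols that are recorded as records versus the censored symbols in the KT count: one has to verify that the "cost of a new symbol" term $\tfrac{1/2}{i+(m_i+1)/2}$ does not accumulate more than $O(\#\text{records}\cdot \log n)$, again absorbed into the negligible Elias-type contribution. Assembling (a)+(b)+(c), taking the supremum over $\PROB\in\Lambda_f$, and matching against the metric-entropy lower bound completes the argument.
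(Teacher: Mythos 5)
Your overall architecture coincides with the paper's: prove a sharp lower bound on $R^+(\Lambda_f^n)$ via the metric-entropy characterization of \citet{MR1604481}, prove that the AC-code's redundancy is at most $(1+o(1))(\log e)\int_1^n U_c(x)/(2x)\,\mathrm{d}x$ by splitting off the Elias cost and analysing the growing-alphabet KT mixture through the running maximum $M_i$, and conclude adaptivity by sandwiching. On the AC-code side your plan is essentially the paper's (Rényi representation plus concavity of $U\circ\exp$ replaces your Gumbel-domain argument, and the $\mathbb{E}[M_n\ln M_n]$ term, handled in the paper by Efron--Stein/entropy sub-additivity and by the fact that $U_c(n)\ln U_c(n)=o\bigl(\int_1^n U_c(x)/x\,\mathrm{d}x\bigr)$, is missing from your sketch but is secondary).

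The genuine gap is in your lower bound. The concrete mechanism you propose --- restrict to a threshold alphabet $\{1,\dots,u\}$, invoke the full-simplex rate $\tfrac{u-1}{2}\log n$, optimize $u\approx U_c(n)$, then ``telescope over dyadic scales'' --- cannot deliver the stated constant. First, $\tfrac{u-1}{2}\log n$ is the redundancy of the \emph{unconstrained} $u$-letter simplex; the envelope constraint $p(k)\le f(k)$ shrinks the contribution of each tail coordinate to roughly $\tfrac12\ln\bigl(n\overline{F}_c(k)\bigr)$, which vanishes as $k\uparrow U_c(n)$. This is exactly what equation (1) of the paper encodes, and it is why the truth is $\int_0^{U_c(n)}\tfrac{\ln(n\overline{F}_c(x))}{2}\mathrm{d}x$, strictly smaller than $\tfrac{U_c(n)}{2}\ln n$ (by the factor $\alpha/(\alpha+1)$ for sub-exponential envelopes); so $\tfrac{u-1}{2}\log n$ with $u\approx U_c(n)$ is not even a valid lower bound for the envelope class. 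Second, blockwise telescoping cannot rescue this: for memoryless classes minimax redundancy is subadditive across blocks, so summing dyadic-block redundancies yields upper bounds, not lower bounds. What the paper actually does, and what your appeal to \citet{MR1604481} requires, is (i) a sharp two-sided computation of the Hellinger metric entropy of $\Lambda_f^1$, obtained by covering/packing volume comparisons inside the hyper-rectangle $\prod_k[0,\sqrt{f(k)}]$ intersected with the unit sphere, where the non-decreasing hazard rate is used to compare $f(k)$ with $\overline{F}(k-1)$ (Proposition 3 and the appendix), and (ii) a verification that $t\mapsto\int_1^{t^2}U_c(x)/(2x)\,\mathrm{d}x$ is \emph{very slowly varying} (Proposition 2 (iii)), without which Theorem 2 (Opper--Haussler) cannot be invoked to transfer the entropy asymptotics into the exact redundancy asymptotics. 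Neither ingredient appears in your proposal, and the heuristic you substitute for them would fail quantitatively.
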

\begin{rem}
Note that the \textsc{ac}-code is almost trivially adaptive over
classes of memoryless sources with alphabet $\{1, \ldots, k\}, k \in
\Np$:  almost-surely eventually, the largest symbol in the sequence
coincides with the right-end of the source distribution, and the
minimaxity (up to the first order) of Krichevsky-Trofimov mixtures
settles the matter.
\end{rem}
The following corollary provides the bridge with \citeauthor{Bon11}'s
work on classes defined by  exponentially decreasing envelopes (See
Definition \ref{dfn:subexp}).
\begin{cor}\label{thm:main-result}
The \textsc{ac}-code is adaptive with respect to sub-exponential
envelope classes  $\cup_{\alpha\geq 1,\beta>0,\gamma>1} \Lambda(\alpha,\beta,\gamma)$.  
Let $Q^n$ be the coding probability associated with the \textsc{ac}-code, then  
\begin{eqnarray*}
R^+(Q^n; \Lambda^n(\alpha,\beta,\gamma))& \leq &
(1+o_{\alpha,\beta,\gamma}(1)) R^+(\Lambda^n(\alpha,\beta,\gamma))
\end{eqnarray*}
as $n$ tends to infinity.
\end{cor}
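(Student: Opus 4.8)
The plan is to read off the corollary from Theorem~\ref{thm:main-result-new}. By the definition of asymptotic adaptivity, it suffices, for each fixed $(\alpha,\beta,\gamma)$ with $\alpha\geq 1$, $\beta>0$, $\gamma>1$, to verify that the envelope $f(k)=\gamma e^{-(k/\beta)^{\alpha}}$ (replaced by $f\wedge 1$ if necessary, which alters neither $\Lambda(\alpha,\beta,\gamma)$ nor the tail $\overline F$) has \emph{finite} and \emph{non-decreasing} hazard rate. Once this is established, Theorem~\ref{thm:main-result-new} applied with $\Lambda_f=\Lambda(\alpha,\beta,\gamma)$ gives both $R^+(Q^n;\Lambda^n(\alpha,\beta,\gamma))\leq(1+o(1))(\log e)\int_1^n\frac{U_c(x)}{2x}\,\mathrm{d} x$ and $R^+(\Lambda^n(\alpha,\beta,\gamma))=(1+o(1))(\log e)\int_1^n\frac{U_c(x)}{2x}\,\mathrm{d} x$; since a finite hazard rate forces $U_c(x)\to\infty$, hence the common integral to diverge, dividing the first estimate by the second yields $R^+(Q^n;\Lambda^n(\alpha,\beta,\gamma))\leq(1+o(1))R^+(\Lambda^n(\alpha,\beta,\gamma))$, which is exactly the claim.

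Finiteness is immediate: $f(k)>0$ for all $k\geq 1$, so the envelope has infinite support and its hazard rate is finite on $[l_f-1,\infty)$ by the remark in Section~\ref{sec:hazard}. For monotonicity, the hazard-rate increment is $h_c(\lfloor t\rfloor+1)-h_c(\lfloor t\rfloor)=\ln\bigl(\overline F(\lfloor t\rfloor)/\overline F(\lfloor t\rfloor+1)\bigr)$, so the hazard rate is non-decreasing if and only if the sequence $\bigl(\overline F(k)\bigr)_{k\geq l_f-1}$ is log-concave, i.e.\ $\overline F(k)^2\geq\overline F(k-1)\,\overline F(k+1)$. Now $\overline F(k)=\sum_{j>k}f(j)$ for $k\geq l_f-1$ is a tail sum of the sequence $\bigl(f(j)\bigr)_{j\geq l_f}$, which is log-concave because $\ln f(t)=\ln\gamma-(t/\beta)^{\alpha}$ is concave for $\alpha\geq 1$; and tail sums of a log-concave sequence with no internal zeros are again log-concave (a classical fact, equivalent to the preservation of log-concavity under discrete convolution). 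Hence $\overline F$ is log-concave and the hazard rate is non-decreasing. The continuous picture is the same: the Weibull-type tail $x\mapsto\int_x^\infty e^{-(t/\beta)^{\alpha}}\,\mathrm{d} t$ has non-decreasing hazard rate precisely when $\alpha\geq 1$. (Poisson envelopes are handled the same way, via log-concavity of the Poisson tail.) I expect this log-concavity step to be the only non-formal point; the one place to be slightly careful is the boundary index $l_f$, where the envelope distribution truncates the mass to $F(l_f)\leq f(l_f)$, but since $\overline F(k)$ for $k\geq l_f-1$ depends only on the values $f(j)$ with $j\geq l_f$, this truncation is invisible to the tail and the argument goes through untouched.

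Finally, to make the link with \citeauthor{Bon11}'s analysis explicit, one evaluates the redundancy rate. A Laplace-type estimate gives $\overline F_c(x)\sim\frac{\gamma\beta^{\alpha}}{\alpha}\,x^{1-\alpha}e^{-(x/\beta)^{\alpha}}$, hence $U_c(t)\sim\beta(\ln t)^{1/\alpha}$, and therefore
\[
(\log e)\int_1^n\frac{U_c(x)}{2x}\,\mathrm{d} x\;\sim\;\frac{\alpha\,\beta\,\log e}{2(\alpha+1)}\,(\ln n)^{(\alpha+1)/\alpha}\, .
\]
For $\alpha=1$ this reduces to $\frac{\beta\log e}{4}(\ln n)^2$, which coincides with the redundancy rate obtained by \citeauthor{Bon11} for exponentially vanishing envelopes, so Corollary~\ref{thm:main-result} genuinely extends his theorem from $\alpha=1$ to all $\alpha\geq 1$. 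These asymptotics of $U_c$ are routine and are needed only for the comparison with prior work, not for the adaptivity statement itself, which follows from the two displays of Theorem~\ref{thm:main-result-new} as soon as the hazard-rate hypothesis has been checked.
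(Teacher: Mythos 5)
Your proposal is correct and follows essentially the same route as the paper: the corollary is obtained by applying Theorem~\ref{thm:main-result-new} once the sub-exponential envelope is checked to have finite and non-decreasing hazard rate, and your explicit rate computation agrees with Proposition~\ref{prop:minimax-redundancies} (indeed $\frac{\alpha\beta}{2(\alpha+1)}(\ln 2)^{1/\alpha}(\log n)^{1+1/\alpha}=\frac{\alpha\beta\log e}{2(\alpha+1)}(\ln n)^{1+1/\alpha}$). The only difference is that the paper merely asserts the hazard-rate property for these envelopes, whereas you supply the verification via log-concavity of $\ln f$ and of tail sums of a log-concave sequence, which is a correct (and welcome) completion of the same argument; the minor boundary issues you flag at $l_f$ (where $\overline F(l_f-1)=1\leq\sum_{j\geq l_f}f(j)$) only make the log-concavity inequality easier and affect $U_c$ by at most a bounded amount.
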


\citet{Bon11} showed that the \textsc{ac}-code is adaptive over exponentially decreasing envelopes, 
that is over $\cup_{\beta>0,\gamma>1} \Lambda(1,\beta,\gamma).$ Theorem~\ref{thm:main-result}  shows 
that the \textsc{ac}-code is  adaptive to both the scale and the shape parameter. 

The next equation helps in  understanding the relation between the redundancy of the \textsc{ac}-code and the metric entropy:
\begin{equation}\label{eq:bizarre}
  \int_1^t \frac{U(x)}{2x} \mathrm{d} x
=  \int_{0}^{U(t)} \frac{\ln (t \overline{F}_s(x))}{2} \mathrm{d}x \, .
\end{equation}
The elementary proof relies on the fact $t\mapsto U(\mathe^t)$
is the inverse of the hazard function of the smoothed envelope
distribution $-\ln \overline{F}_s$, it is given at the end of  the appendix. 
The left-hand-side of the equation appears (almost) naturally in the derivation of
the redundancy of the \textsc{ac}-code. The right-hand-side or rather an equivalent of it, appears during 
the computation of the minimax redundancy of the envelope classes considered in this paper.

The proof of Theorem~\ref{thm:main-result} is organized in two parts: Proposition~\ref{prop:minimax-redundancies} from Section \ref{sec:minimax-redundancies} 
gives a lower bound for  the minimax redundancy of source classes defined by envelopes with finite and non-decreasing hazard rate.

The redundancy of the \textsc{ac}-coding probability 
$Q^n$ with respect to $\PROB^n\in \Lambda^n_f$ is analyzed in Section \ref{sec:redundancy-ac-code}.
The pointwise redundancy   is upper bounded in the following way:
\begin{eqnarray*}
\lefteqn{  -\log Q^n (X_{1:n}) + \log \PROB^n (X_{1:n}) }\\
&\leq &\underbrace{\ell(C_E)}_{\textsc{(i)}} + \underbrace{\ell(C_M) + \log \PROB^n (\widetilde{X}_{1:n})}_{\textsc{(ii)}}  \, .
\end{eqnarray*}
Proposition \ref{prop:elias:encoding:length} asserts that \textsc{(i)} is negligible with respect to $R^+(\Lambda_f^n)$
and Proposition~\ref{prop:adpat-mixt-coding} asserts that the expected value of \textsc{(ii)} 
is equivalent to $R^+(\Lambda_f^n)$.

\section{Minimax redundancies}

\label{sec:minimax-redundancies}
\subsection{The Haussler-Opper lower bound}
The minimax redundancy of source classes defined by  envelopes $f$ with finite and non-decreasing hazard rate is characterized using Theorem 5 from \citep{MR1604481}. This theorem relates 
the minimax redundancy (the minimax cumulative entropy risk in the 
language of \citeauthor{MR1604481})
to the metric entropy of the class of marginal distributions with respect to Hellinger distance.
Recall that the Hellinger distance (denoted by $d_H$) between two probability
distributions $P_1$  and $P_2$ on  $\mathbb{N}$, is defined as the
$\ell_2$ distance between the square roots of the corresponding 
probability mass functions $p_1$  and $p_2$:
\begin{displaymath}
d_H^2(P_1,P_2) =
   \sum_{k\in \mathbb{N}} \Big(p_1(k)^{1/2} -p_2(k)^{1/2} \Big)^2 \, . 
\end{displaymath} 

The next lower bound on minimax redundancy can be extracted 
 from  Theorem 5  in \citep{MR1604481}. It relies on the fact that 
the Bayes redundancy is never larger than the minimax redundancy. 
\begin{thm}\label{thm:haussler:opper:lower}
Using notation and conventions from Section \ref{sec:introduction}, 
  for any prior probability distribution $\pi$ on $\Lambda_1$, 
  \begin{eqnarray*}
    R^+(\Lambda^n) 
    & \geq &  \EXP_{\pi_1}\left[ -\log \EXP_{\pi_2} e^{-n
        \frac{d^2_H(P_1,P_2)}{2}} \right] \, . 
  \end{eqnarray*}
  where $\pi_1=\pi_2= \pi$ and $P_1\sim \pi_1,P_2\sim \pi_2$ are
  picked independently.  
\end{thm}
For the sake of self-reference, a  rephrasing of the proof from
\citep{MR1604481} is given in the Appendix.

For a source class $\Lambda$,  Let ${\cal H}_{\epsilon}(\Lambda)$ 
be the $\epsilon$-entropy of $\Lambda^1$ with respect to the Hellinger metric. That is, 
 ${\cal H}_{\epsilon}(\Lambda)=\ln {\cal D}_{\epsilon}(\Lambda)$ where ${\cal D}_{\epsilon}(\Lambda)$ is the cardinality of the smallest finite partition of $\Lambda^1$ 
into sets of diameter at most $\epsilon$ when such a finite partition exists. 

The connection between the minimax redundancy of $\Lambda^n$ and 
the metric entropy of $\Lambda^1$ under Hellinger metric is a direct
consequence of Theorem \ref{thm:haussler:opper:lower}.

\begin{thm}\label{thm:haussler:opper:lower:2}
   Let  $\epsilon_n>0$ be such that 
$$\frac{n \epsilon_n^2}{8} \geq \mathcal{H}_{\epsilon_n}(\Lambda) $$ then 
  $$R^+(\Lambda^n) \geq \log (e) \mathcal{H}_{\epsilon_n}(\Lambda)
  -1 \, .$$ 
\end{thm}

\begin{proof}[Proof of Theorem \ref{thm:haussler:opper:lower:2}]
  Choose a prior $\pi$ that is uniformly distributed over an
  $\epsilon/2$-separated set of maximum cardinality.  Such a set has
  cardinality at least $\mathcal{D}_\epsilon=\mathcal{D}_\epsilon(\Lambda^1)$.

For each $P_1$ in the $\epsilon/2$-separated,
\begin{multline*}
-\log \EXP_{\pi_2} \mathe^{-n \frac{d_H^2(P_1,P_2)}{2}}
  \geq -\log \left(\frac{1}{\mathcal{D}_\epsilon}+
    \frac{\mathcal{D}_\epsilon-1}{\mathcal{D}_\epsilon}
\mathe^{-n \frac{\epsilon^2}{8}}   \right)\, . 
\end{multline*}
Averaging over $P_1$ leads to 
  \begin{align*}
    R^+(\Lambda^n) 
     &\geq -\log \left( \frac{1}{\mathcal{D}_\epsilon}
      + \mathe^{ -n\frac{\epsilon^2}{8}} \right)  \\
     &\geq \log e \sup_{\epsilon} \min \left( \mathcal{H}_{\epsilon}(\Lambda),
      \frac{n\epsilon^2}{8}\right) - 1 \, . 
  \end{align*}
\end{proof}

Up to this point, no assumption has been made regarding the behavior of $\mathcal{H}_{\epsilon}(\Lambda)$
as $\epsilon$ tends to $0$. Recall  that a measurable function $f\colon (0,\infty)\rightarrow [0,\infty)$
 is said to be 
\emph{slowly varying} at infinity if  for all  $\kappa>0$,
$
\lim_{x\rightarrow +\infty}\frac{f(\kappa x)}{f(x)}=1 
$ \citep[See][for a thorough treatment of regular
variation]{BinGolTeu89}. 

Assuming that
$x\mapsto \mathcal{H}_{1/x}(\Lambda)$
is slowly varying at infinity allows us to solve
equation $\mathcal{H}_{\epsilon}(\Lambda)=n\epsilon^2/8$  as $n$ tends
to infinity. 

Indeed, using $g(x)$ as a shorthand for
$\mathcal{H}_{1/x}(\Lambda)$, we look for $x$ satisfying 
$n/8=x^2g(x)$ or equivalently
\begin{equation}\label{eq:3}
  1 =  \frac{x}{(n/8)^{1/2}} \sqrt{g \left( (n/8)^{1/2} \frac{x}{(n/8)^{1/2}}\right)} \, . 
\end{equation}
Assume that $g$ is slowly varying. A Theorem by De
Bruijn \citep[See][Theorem 1.5.13]{BinGolTeu89} asserts that there exists slowly varying
functions $g^\#$ such that $g^\#(x) g(xg^\#(x))\sim 1$ as $x$ tends to
infinity. Moreover  all such functions are asymptotically equivalent,
they 
are called De Bruijn conjugates of $g$. If ${g}$ is slowly
varying, so is $\sqrt{g}$, and  we may  consider its De Bruijn conjugate $(\sqrt{g})^\#$.

Any sequence $(x_n)_n$  of solutions of Equation \eqref{eq:3} is such that $x_n/\sqrt{n/8}$ is
asymptotically equivalent to $\sqrt{g}^\#((n/8)^{1/2})$. 
Hence $\epsilon_n = 1/x_n \sim  (n/8)^{-1/2} (\sqrt{g}^\#((n/8)^{1/2}))^{-1}$. We may 
deduce that
\begin{eqnarray}
 R^+(\Lambda^n) &\geq &(1+o(1)) \log\mathe \, \frac{n \epsilon_n^{2}}{8} \notag \\
& = & \log \mathe
\left(\sqrt{g}^\#((n/8)^{1/2})\right)^{-2}\, .\label{eq:1}
\end{eqnarray}
The computation
of De Bruijn conjugates is usually a delicate topic
\citep[see again][]{BinGolTeu89}, but  strengthening the slow variation assumption 
simplifies the matter. We have
not been able to find a name for the next notion in the literature, although it
appears in early work by \citet{bojanic1971slowly}.  We nickname it  \emph{very slow variation} in the paper. 

\begin{dfn}
  A  continuous, non decreasing function $g\colon (0,\infty)\rightarrow [0,\infty)$ is said to be 
\emph{very slowly varying} at infinity if  for all $\eta \geq 0$ and $\kappa>0$,
$$
\lim_{x\rightarrow +\infty}\frac{g(\kappa x(g(x))^{\eta})}{g(x)}=1 \, .
$$
\end{dfn}
Note that not all slowly varying functions satisfy these
conditions. For example, $x\mapsto \exp((\ln x)^\beta)$ with
$\beta>1/2$ does not \citep[see][]{BinGolTeu89}. 

\begin{rem}
  If $g$ is very slowly varying, then for all $\alpha,\beta>0$, the
  function defined by 
  $x\mapsto (g(x^\beta))^\alpha$  is also very slowly varying. 
\end{rem}

\citeauthor{bojanic1971slowly} have proved that if $g$ is a very slowly
varying function, the De Bruijn conjugates of $g$ are asymptotically
equivalent to $1/g$ \citep[See][Corollary 2.3.4]{BinGolTeu89}. Hence, if $g$ is very slowly varying, the De
Bruijn conjugates of $x\mapsto \sqrt{g}(x)$ are asymptotically
equivalent to $1/\sqrt{g}(x).$ 

Taking advantage of the very slow variation property allows us to make
lower bound \eqref{eq:1} transparent. 

\begin{thm}\citep[Theorem 5]{MR1604481}
  \label{thm:opper:haussler}
Assume that the function over $[1,\infty)$ defined by 
$
x \mapsto {\cal H}_{1/x}(\Lambda^1) 
$
is very slowly varying,  then
$$
R^+\left(\Lambda^n\right) \geq (\log e)\mathcal{H}_{n^{-1/2}}(\Lambda)\left(1+o(1)\right) \quad\text{as $n$ tends to $+\infty$.}
$$
\end{thm}

In order to lower bound the minimax redundancy of source classes
defined by envelope distribution with non-decreasing hazard rate, in
the next section we establish that the function $t \mapsto
\int_1^{t^2} \tfrac{U(x)}{x} \mathrm{d}x$ has the very slow variation
property, then in Section  \ref{sec:minimax-rate} we check that
$\int_1^{t^2} \tfrac{U(x)}{x} \mathrm{d}x \sim
\mathcal{H}_{1/t}(\Lambda)$. 
\subsection{Slow variation and consequences}
\label{sec:slow-variation}

Let us now state some analytic properties that will prove useful when checking
 that source classes  defined by envelopes with finite and non-decreasing hazard rate are indeed small. 

\begin{prop}\label{prop:analytic:pp:1}
 Let $F$ be an absolutely continuous distribution with finite and non-decreasing hazard
 rate. Let $U\colon [1,\infty) \to \mathbb{R}$ be defined by $U(t) =
 F^{-1}(1-1/t)$. Then
 \begin{itemize}
 \item[(i)] the inverse of the hazard function, that is the function
   on $]0,\infty)$ defined by  
 $t\mapsto U(\mathe^t)= F^{-1}(1-\mathe^{-t})$ is concave.
 \item[(ii)] The function 
 $U$ is slowly varying at infinity. 
\end{itemize}
\end{prop}

The proof relies on some classical results from regular variation
theory. For the sake of self-reference, we reproduce those results
here, proofs can be found in \citep{BinGolTeu89} or in
\citep{HaaFei06}.

Theorem 1.2.6 in \cite{HaaFei06} characterizes the so-called domains of
attraction of Extreme Value Theory thanks to an integral
representation of the hazard function $-\ln \overline{F}$. We
reproduce here what concerns the Gumbel domain of attraction as 
the envelope distributions we deal with belong to this domain.

\begin{thm}\label{thm:mda:hazard} A distribution function $F$ belongs
  to the Gumbel max-domain of attraction, if and only if there
  exists $t_0< F^{-1}(1)$ and $c\colon [t_0,F^{-1}(1))\to
  \mathbb{R_+}$, and a continuous function $\psi$ such that $\lim_{t \to
    F^{-1}(1)} c(t)=  c_* \in \mathbb{R}_+$ such that for $t\in
  [t_0,F^{-1}(1))$, 
  \begin{displaymath}
    -\ln \overline{F}(t) = -\ln c(t)  + \int_{t_0}^t \frac{1}{\psi(s)}
    \mathrm{d}s 
  \end{displaymath}
where  $\lim_{t \to F^{-1}(1)} \psi'(t) =0$ and $\lim_{t \to
      F^{-1}(1)} \psi(t) =0$ when $F^{-1}(1)<\infty$. 

If $F$ belongs to the Gumbel max-domain of attraction, then $t\mapsto
F^{-1}(1-1/t)$ is slowly varying at infinity. 
\end{thm}
\begin{rem}
  Under the so-called Von Mises conditions $\psi$ may be chosen as the
  reciprocal of the hazard rate.
\end{rem}
\begin{proof}[Proof of Proposition \ref{prop:analytic:pp:1}]
(i) As $t \mapsto F^{-1}(1-\mathe^{-t})$
is the inverse of the hazard function $-\ln \overline{F}$, its derivative is  
equal to the reciprocal of the hazard rate evaluated at
$F^{-1}(1-\mathe^{-t})$. As the hazard rate is assumed to be
non-decreasing,  the derivative  of $F^{-1}(1-\mathe^{-t})$ is 
non-increasing with respect to  $t$.\\  
(ii) As we consider an absolutely continuous distribution, we may and
do assume that using the notation of Theorem \ref{thm:mda:hazard},
 $c(t)$ is  constant and that $\psi$ 
is the reciprocal of the hazard rate and that it is differentiable. 
The function $\psi$ is a positive non
increasing function, thus its derivative converges to $0$ at
infinity. Hence, by Theorem \ref{thm:mda:hazard}, the smoothed envelope distribution belongs to the Gumbel max-domain of
attraction. This entails that $t \mapsto F^{-1}(1-1/t)$ is slowly
varying at infinity.
\end{proof}

The next Theorem from \citet{bojanic1971slowly} can be found in
\citep[][Theorem 2.3.3]{BinGolTeu89}. It asserts that if $g$ is slowly
varying and if for some  $x>0$, $g(tx)/g(t)$ converges sufficiently
fast towards $1$, then $g$ is also very slowly varying. 
\begin{thm}\label{thm:bojanic:seneta}
If $g$ varies slowly at infinity and for some $x\in \mathbb{R}_+$
\begin{displaymath}
  \lim_{t \to \infty} \left(\frac{g(tx)}{g(t)}-1 \right) \ln g(t) = 0 
\end{displaymath}
then $g$ is very slowly varying, 
\begin{displaymath}
  \lim_{t \to \infty} \left(\frac{g(t(g(t))^\kappa)}{g(t)}-1 \right) = 1 
\end{displaymath}
locally uniformly in $\kappa\in \mathbb{R}$.
\end{thm}
The next proposition establishes that if $F$ is an envelope
distribution with finite non-decreasing hazard rate,
$U(t)=F^{-1}(1-1/t)$, then the function $t \mapsto \int_{1}^{t}
\tfrac{U(x)}{x}\mathrm{d}x$
satisfies the condition of Theorem \ref{thm:bojanic:seneta}.
As a byproduct of the proof, we show that  $U(t)\ln U(t)$ is asymptotically negligible with respect to $\int_{1}^{t}
\tfrac{U(x)}{x}\mathrm{d}x$ as $t$ tends to infinity. Both properties are instrumental in the derivation of
the main results of this paper. First the sequence $(\int_{1}^{n}
\tfrac{U(x)}{x}\mathrm{d}x)_n$ is asymptotically equivalent to 
$\mathcal{H}_{1/n^{1/2}}(\Lambda^1_f)$ and thus to the lower bound on
the minimax redundancy rate  for $\Lambda_f$; 
second,  by Proposition \ref{prop:adpat-mixt-coding}, it is also
equivalent to the average redundancy of the mixture encoding produced
by the \textsc{ac}-code (the negligibility of $U(t)\ln U(t)$ is used
in the proof); third, the cost of the Elias encoding of
censored symbols does not grow faster than $U(n)$. 
\begin{prop}\label{prop:analytic:pp:2}
Using notation from Proposition \ref{prop:analytic:pp:1}.
\begin{enumerate}[i)]
\item
$$
\lim_{t\rightarrow +\infty} \frac{U(t)\ln U(t)}{\int_{1}^{t} \frac{U(x)}{x}\mathrm{d}x}=0.
$$
\item
The function  $\widetilde{h} \colon [1,\infty) \rightarrow \mathbb{R}$,  $\widetilde{h}(t)= \int_1^{t^2} \tfrac{U(x)}{2x} \mathrm{d}x$ is very slowly varying.
\end{enumerate}
\end{prop}


A function $g\colon \mathbb{R}_+ \to \mathbb{R}_+$ has the \emph{extended
regular variation property}, if there exists $\gamma\in \mathbb{R}$ and
a measurable function
$a\colon \mathbb{R}_+ \to \mathbb{R}_+$ such  that for all $y\geq 1$
\begin{displaymath}
  \lim_{t\to \infty} \tfrac{g(ty)-g(t)}{a(t)}  = \int_1^y x^{\gamma-1}
  \mathrm{d}x \,.
\end{displaymath}  If $\gamma=0$, $g$ has the extended slow variation
property. The function $a$ is called the auxiliary function.  We refer to \citep{BinGolTeu89} or \citep{HaaFei06} for a thorough
treatment of this notion. Basic results from regular variation theory
assert that if $g$ has the extended slow variation property, all
possible auxiliary functions 
are slowly varying and that $\lim_{t\to \infty} a(t)/g(t)=0$.

\begin{proof}[Proof of Proposition \ref{prop:analytic:pp:2}]
 To prove i), note that  by concavity of $t\mapsto U(\mathe^t)$,
\begin{eqnarray*}
  \int_1^t \frac{U(x)}{x} \mathrm{d}x & = &   \int_0^{\ln t} {U(\mathe^s)}\mathrm{d}s \\
& \geq & \frac{\ln(t)}{2} U(t)\, . 
\end{eqnarray*}
Plugging this upper bound leads to
\begin{eqnarray*}
   \frac{U(t)\ln (U(t))}{\int_{1}^{t} \frac{U(x)}{x}\mathrm{d}x} &\leq &  2 \frac{U(t)\ln (U(t))}{U(t)\ln(t) }  = 2 \frac{\ln (U(t))}{\ln(t)}
\end{eqnarray*}
which tend to $0$ as $t$ tends to infinity \citep[Again
by][Proposition B.1.9, Point 1]{HaaFei06}. 

ii) We first prove that  $t\mapsto \int_1^t \frac{U(x)}{x} \mathrm{d}x$ has the
\emph{extended slow variation property with auxiliary function}
$U(t)$, that is 
 for all $y\geq 1$, 
\begin{displaymath}
  \lim_{t\to \infty} \frac{ \int_1^{ty} \frac{U(x)}{x} \mathrm{d}x-
    \int_1^t \frac{U(x)}{x} \mathrm{d}x}{U(t)} =  \log y \, . 
\end{displaymath}
Indeed
\begin{eqnarray*}
  \int_1^{ty} \frac{U(x)}{x} \mathrm{d}x-
    \int_1^t \frac{U(x)}{x} \mathrm{d}x & = & \int_1^{y}
    \frac{U(tx)}{x} \mathrm{d}x \\
&= & U(t) \int_1^{y}
    \frac{U(tx)}{U(t)} \frac{1}{x} \mathrm{d}x
\end{eqnarray*}
Now the desired result follows from the Uniform Convergence Theorem
from regular variation theory: if $U$ is slowly varying, then $\tfrac{U(tx)}{U(t)}$  converges uniformly to $1$ for $x\in
[1,y]$ as $t$ tends to infinity. 

In order to invoke Theorem \ref{thm:bojanic:seneta} to establish  ii)
it suffices to notice that
\begin{align*}
&\lim_{t\to \infty} \left(\frac{\int_1^{ty}
      \frac{U(x)}{x} \mathrm{d}x}{\int_1^t \frac{U(x)}{x} \mathrm{d}x}
    -1\right) \ln \int_1^t \frac{U(x)}{x} \mathrm{d}x\\
  &\quad = \lim_{t\to \infty} \frac{\int_1^{ty}
    \frac{U(x)}{x} \mathrm{d}x -\int_1^t \frac{U(x)}{x} \mathrm{d}x}{U(t)}
    \frac{U(t) \ln \int_1^t \frac{U(x)}{x} \mathrm{d}x}{\int_1^t
    \frac{U(x)}{x} \mathrm{d}x}\\
  &\quad = \ln y \lim_{t\to \infty} \frac{U(t)    \ln \int_1^t \frac{U(x)}{x} \mathrm{d}x}{\int_1^t
    \frac{U(x)}{x} \mathrm{d}x} \\
  &\quad \leq \ln y \lim_{t\to \infty} \frac{U(t)    \ln \left( U(t)\ln(t)\right)}{\int_1^t
    \frac{U(x)}{x} \mathrm{d}x} \\
  &\quad \leq \ln y \left\{ \lim_{t\to \infty} \frac{U(t)    \ln U(t)}{\int_1^t
    \frac{U(x)}{x} \mathrm{d}x}+  \lim_{t\to \infty} \frac{ U(t)\ln\ln(t)}{\int_1^t
    \frac{U(x)}{x} \mathrm{d}x}\right\}\\
  &\quad \leq \ln y  \lim_{t\to \infty} \frac{\ln\ln(t)}{2 \ln t} \\
  &\quad = 0 \,
\end{align*}
where the first inequality comes from the fact that $U$ is
non-decreasing, the second inequality from i), the third inequality
from the first intermediate result in the proof of i).

\end{proof}

\subsection{Minimax rate}
\label{sec:minimax-rate}

The $\epsilon$-entropy of envelope classes defined by envelope
distributions with  finite and non-decreasing hazard rate
is related to the behavior of the quantile function of the smoothed
envelope distribution.
\begin{prop} \label{prop:entropy:hazard:rate}
  Let $f$ be an envelope function. Assume that the smoothed envelope
  distribution $F$ has finite and non-decreasing
  hazard rate, let $U(t)=F^{-1}(1-1/t)$ be the quantile of order $1-1/t$ of the
  smoothed envelope distribution, then
  \begin{displaymath}
    {\cal H}_{\epsilon}(\Lambda_f)  = (1+o_f(1))
 \int_{0}^{1/\epsilon^2} \frac{U(x)}{2x} \mathrm{d}x
  \end{displaymath}
as $\epsilon$ tends to $0$.
\end{prop}
The proof  follows  the approach of~\citep{Bon11}. 
 It is stated in the appendix. 

The next lower bound for $R^+(\Lambda_f^n)$
follows from  a direct application of Theorem \ref{thm:opper:haussler} and Proposition \ref{prop:analytic:pp:2}:

 \begin{prop}
  Using notation from Proposition \ref{prop:entropy:hazard:rate}, 
\begin{displaymath}
R^+(\Lambda_f^n)   \geq  (1+o_f(1))(\log e)
\int_1^{n} \frac{U(x)}{2x} \mathrm{d}x 
  \end{displaymath}
  as $n$ tends to $+\infty$.
\end{prop}
A concrete corollary follows easily. 
\begin{prop}\label{prop:minimax-redundancies}
The minimax redundancy of the sub-exponential envelope class with parameters $(\alpha,\beta,\gamma)$ satisfies
\begin{multline*}
\lefteqn{R^+(\Lambda^n(\alpha,\beta,\gamma))}\\
\begin{aligned}
  &\geq \frac{\alpha}{2(\alpha+1)}\beta \left(\ln(2)\right)^{1/\alpha}\left(\log n \right)^{1+1/\alpha}\left(1+o(1)\right)\\
  &\qquad \text{as $n$ tends to $+\infty$.}
\end{aligned}
\end{multline*}
\end{prop}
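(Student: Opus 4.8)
The plan is to obtain the asymptotic expansion of $R^+(\Lambda^n(\alpha,\beta,\gamma))$ from the preceding proposition, which gives $R^+(\Lambda_f^n) = (1+o(1))(\log e)\int_1^n \frac{U_c(x)}{2x}\,\mathrm{d}x$ once we know that the sub-exponential envelope $f(k)=\gamma e^{-(k/\beta)^\alpha}$ has finite and non-decreasing hazard rate. First I would check this hypothesis: the continuous hazard function behaves like $h_c(t)\sim (t/\beta)^\alpha - \ln\gamma$, and the hazard rate $h_c'$ (piecewise constant, equal to $\ln(1+f(\lfloor t\rfloor+1)/\overline{F}(\lfloor t\rfloor+1))$) is non-decreasing because $f$ is eventually log-concave (its log is $-(k/\beta)^\alpha$, convex in $k$ for $\alpha\geq 1$) and $\overline{F}$ decreases; more precisely one verifies that $(k/\beta)^\alpha - ((k-1)/\beta)^\alpha$ is non-decreasing in $k$ when $\alpha\geq 1$, which controls the increments of $h_c$. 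This is the point where the constraint $\alpha\geq 1$ is genuinely used.

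Next I would compute $U_c(t)$ asymptotically. Since $\overline{F}_c(x) = \exp(-h_c(x))$ and $h_c(x) = (x/\beta)^\alpha(1+o(1))$ as $x\to\infty$ (the $-\ln\gamma$ term and the piecewise-linear interpolation error are lower order), inverting $1/\overline{F}_c(U_c(t)) = t$, i.e. $h_c(U_c(t)) = \ln t$, gives $(U_c(t)/\beta)^\alpha \sim \ln t$, hence
\begin{displaymath}
U_c(t) = \beta (\ln t)^{1/\alpha}(1+o(1)) \qquad\text{as } t\to\infty.
\end{displaymath}
Then I would substitute this into the redundancy integral. Writing $x=e^s$,
\begin{displaymath}
\int_1^n \frac{U_c(x)}{2x}\,\mathrm{d}x = \frac{1}{2}\int_0^{\ln n} U_c(e^s)\,\mathrm{d}s = \frac{\beta}{2}(1+o(1))\int_0^{\ln n} s^{1/\alpha}\,\mathrm{d}s = \frac{\beta}{2}\cdot\frac{\alpha}{\alpha+1}(\ln n)^{1+1/\alpha}(1+o(1)).
\end{displaymath}
Multiplying by $\log e = 1/\ln 2$ and converting $\ln n = (\log n)\ln 2$ so that $(\ln n)^{1+1/\alpha} = (\ln 2)^{1+1/\alpha}(\log n)^{1+1/\alpha}$, the factor $(\log e)(\ln 2)^{1+1/\alpha} = (\ln 2)^{1/\alpha}$ emerges, yielding
\begin{displaymath}
R^+(\Lambda^n(\alpha,\beta,\gamma)) = \frac{\alpha}{2(\alpha+1)}\,\beta\,(\ln 2)^{1/\alpha}(\log n)^{1+1/\alpha}(1+o(1)),
\end{displaymath}
which is the claimed formula.

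The main obstacle is the justification that the $o(1)$ error in $U_c(t)=\beta(\ln t)^{1/\alpha}(1+o(1))$ really does integrate to a negligible contribution, and more delicately that the replacement of the piecewise-linear $h_c$ by the smooth $(t/\beta)^\alpha - \ln\gamma$ does not distort the leading term. I would handle this by bracketing: since the hazard rate is non-decreasing, $h_c$ lies between the smooth interpolant and its shift by one unit, giving two-sided bounds $U_c(t) \leq \beta(\ln t + \ln\gamma)^{1/\alpha}$ and $U_c(t)\geq \beta(\ln t + \ln\gamma)^{1/\alpha} - 1$ up to controllable corrections; both bounds are $\beta(\ln t)^{1/\alpha}(1+o(1))$, and since $U_c$ is monotone and the integral $\int_0^{\ln n} s^{1/\alpha}\,\mathrm{d}s$ diverges, a dominated-convergence / Cesàro argument shows the ratio of the true integral to the leading term tends to $1$. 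The verification of the non-decreasing hazard rate in Definition~\ref{dfn:subexp} — reducing to convexity of $t\mapsto t^\alpha$ and log-concavity of $f$ — is elementary but should be spelled out since it is exactly what places this class within the scope of Theorem~\ref{thm:main-result-new}.
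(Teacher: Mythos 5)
Your proposal is correct and follows essentially the same route as the paper: both rely on the preceding proposition $R^+(\Lambda_f^n)=(1+o(1))(\log e)\int_1^n \frac{U_c(x)}{2x}\,\mathrm{d}x$, establish two-sided bounds showing $U_c(t)=\beta(\ln t)^{1/\alpha}(1+o(1))$ (the paper's explicit inequality~(\ref{eq:evalU}) with the constant $\kappa=1/(1-\exp(-\alpha/\beta^\alpha))$ plays the role of your bracketing, and likewise uses $\alpha\geq 1$ via convexity of $t\mapsto t^\alpha$), and then integrate and convert logarithm bases. Your write-up merely makes explicit two steps the paper leaves implicit (the verification of the non-decreasing hazard rate, asserted in Section~\ref{sec:hazard}, and the final integration), and your constants check out.
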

\begin{proof}
Indeed, if $f$ is a sub-exponential envelope function with parameters $(\alpha,\beta,\gamma)$ one has, for $t> 1$, 
\begin{equation}
\label{eq:evalU}
\beta \left( \ln  \left(\gamma t  \right)\right)^{1/\alpha} -1 \leq U(t)  \leq \beta \left( \ln  \left(\kappa\gamma t \right)\right)^{1/\alpha} -1 
\end{equation}
where   $\kappa=1/(1-\exp(-\alpha/\beta^\alpha))$.

The lower bound follows from $\overline{F}(k)\leq f(k+1) = \gamma \exp(-((k+1)/\beta)^\alpha)$ which 
entails $\overline{F}(k)\leq 1/t  \Rightarrow k+1\geq \beta (\ln(\gamma t))^{1/\alpha}.$


\end{proof}

As observed in the introduction, 
Theorem \ref{prop:upperbound} provides an  easy upper-bound on the
minimax redundancy of envelope classes,  $R^+(\Lambda_f^n)\leq 2+\log
e  +
\tfrac{U(n)}{2} \log n $. For envelope classes with non-decreasing
hazard rate, this upper-bound is (asymptotically) within a factor of
$2$ of the minimax redundancy. Indeed, 
\begin{eqnarray*}
  \int_1^n \frac{U(x)}{2x} \mathrm{d}x & = & \frac{1}{2}\int_0^{\ln n} U\left(
    \mathe^{y}\right) \mathrm{d}y \\
& \geq & \frac{1}{2} \frac{U(n) \ln n}{2} \\
& = &  \frac{U(n) \ln n}{4} \, ,  
\end{eqnarray*}
where the inequality comes from concavity and positivity of $y \mapsto U\left(
    \mathe^{y}\right)$. 
Hence, by Proposition~\ref{prop:minimax-redundancies}, for such
envelope classes
\begin{displaymath}
  R^+(\Lambda_f^n) \geq (1+o_f(1)) \frac{U(n) \log n}{4}  \, . 
\end{displaymath}
The merit of the \textsc{ac}-code is to asymptotically achieve the
minimax lower bound while processing the message online and without
knowing the precise form of the envelope.  This is established in the
next section.

\section{Redundancy of AC-code}
\label{sec:redundancy-ac-code}

The length of the \textsc{ac}-encoding of $x_{1:n}$, is the sum of the length of the Elias encoding $C_E$ of the 
sequence of differences between records $\widetilde{\mathbf{m}}$ and of the length of the mixture encoding $C_M$
of the censored sequence $\widetilde{x}_{1:n}0$. In order to establish Theorem~\ref{thm:main-result}, we first establish 
an upper bound on the average length of $C_E$ (Proposition~\ref{prop:elias:encoding:length}). 

\subsection{Maximal inequalities}
\label{sec:maximal-inequalities}

Bounds on the codeword length of Elias encoding  and on the redundancy of the mixture code 
essentially rely on bounds on the expectation of the largest symbol
$\max(X_1,\ldots,X_n)$  collected in the next propositions.  In the sequel, 
$H_n$ denotes the $n^{\text{th}}$ harmonic number 
$$\ln(n)\leq H_n = \sum_{i=1}^n \frac{1}{i}\leq \ln(n)+1\, .  $$

 \begin{prop}\label{prop:bounds:maxim:monotonehazard}
Let $Y_1,\ldots,Y_n$  be independently identically distributed according to an absolutely continuous distribution function $F$ 
with density $f=F'$ and  non-decreasing hazard rate $f/\overline{F}$. Let $b$ be the infimum  of the hazard rate. 
Let $U(t)= F^{-1}(1-1/t)$ and  $Y_{1,1}\leq \ldots \leq Y_{n,n}$  be the order statistics.
Then, 
\begin{eqnarray*}
  \mathbb{E}[Y_{n,n} ] &\leq &U(\exp(H_n))
\\
  \mathbb{E} [ Y_{n,n}^2] &\leq &\mathbb{E} [ Y_{n,n}]^2 +2/b^2\\
  \mathbb{E} [ Y_{n,n}\ln (Y_{n,n})] &\leq & (\mathbb{E} Y_{n,n})\ln (\mathbb{E} Y_{n,n})) + 2/b^2  \text{ if $Y_i\geq 1$ a.s. }
\end{eqnarray*}
 \end{prop}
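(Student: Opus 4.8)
The plan is to exploit the classical representation of order statistics from a continuous distribution function $F$ via i.i.d.\ standard exponential spacings. First I would recall that if $E_1,\ldots,E_n$ are i.i.d.\ standard exponential and $S_k = \sum_{i=1}^{k} E_i/(n-i+1)$, then $\overline{F}(Y_{k,k}) \overset{d}{=} e^{-S_k}$, so that $Y_{n,n} \overset{d}{=} U(e^{S_n})$ where $S_n = \sum_{i=1}^{n} E_i/i$ has mean $H_n$. The first bound then follows from Jensen's inequality: since $U\circ\exp$ is concave (this is exactly Proposition~\ref{prop:analytic:pp}(ii), as a consequence of the hazard rate $f/\overline{F}$ being non-decreasing), we get $\mathbb{E}[Y_{n,n}] = \mathbb{E}[U(\exp(S_n))] \leq U(\exp(\mathbb{E}[S_n])) = U(\exp(H_n))$.

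For the second and third bounds I would instead use the fact that a non-decreasing hazard rate bounded below by $b$ makes the maximum sub-exponential in a strong sense. Concretely, I would establish a tail comparison: for the distribution $F$, the hazard rate being $\geq b$ everywhere means that for $y \geq Y_{n,n}$-typical values the residual of $Y_{n,n}$ above its own quantiles is stochastically dominated by (a multiple of) an exponential with rate $b$. The cleanest route is to write $Y_{n,n} \overset{d}{=} U(\exp(S_n))$ and control the fluctuation of $U\circ\exp$ around $H_n$: concavity of $U\circ\exp$ gives the one-sided linear bound $U(\exp(s)) \leq U(\exp(H_n)) + (s-H_n)\,(U\circ\exp)'(H_n)$, and the derivative $(U\circ\exp)'(H_n) = \overline{F}_c(U(\cdot))/f_c(U(\cdot)) \leq 1/b$ since the hazard rate is at least $b$. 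Combined with a matching lower control (using monotonicity of $U$ and the exponential lower tail of $S_n$), this reduces $\mathrm{Var}(Y_{n,n})$ and $\mathbb{E}[Y_{n,n}\ln Y_{n,n}] - (\mathbb{E}Y_{n,n})\ln(\mathbb{E}Y_{n,n})$ to the corresponding quantities for $(1/b)(S_n - H_n)$, whose variance is $\sum_{i=1}^n 1/i^2 \leq \pi^2/6 < 2$, yielding the $2/b^2$ constants. For the third inequality the extra hypothesis $Y_i \geq 1$ ensures $\ln Y_{n,n} \geq 0$ and lets me apply a convexity/Jensen argument to $t \mapsto t\ln t$ in the same spirit (writing $\mathbb{E}[Y_{n,n}\ln Y_{n,n}] - \mathbb{E}[Y_{n,n}]\ln\mathbb{E}[Y_{n,n}]$ as an entropy-type functional and bounding it by a variance term via the inequality $t\ln t - t\ln a - (t-a) \le (t-a)^2/a$... rather, by $\phi(t)\le\phi(a)+\phi'(a)(t-a)+\tfrac{(t-a)^2}{2a}$ for $\phi(t)=t\ln t$ on $t\ge a\ge \mathbb{E}Y_{n,n}\ge 1$), controlling everything by $\mathrm{Var}(Y_{n,n}) \le 2/b^2$.

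I expect the main obstacle to be making the second-moment argument rigorous, because concavity of $U\circ\exp$ only gives a one-sided (upper) linear bound, so I cannot directly sandwich $Y_{n,n}$ between two linear functions of $S_n$. The fix I would pursue: instead of a pointwise sandwich, integrate the tail. Using $\overline{F}(y) \le e^{-b(y - y_0)}\overline{F}(y_0)$ for the non-decreasing hazard rate, one shows $\mathbb{P}(Y_{n,n} > \mathbb{E}[Y_{n,n}] + s) \le e^{-bs}$-type bounds directly from a union bound plus the memoryless-like domination, giving $\mathrm{Var}(Y_{n,n}) \le \int_0^\infty 2s\,\mathbb{P}(|Y_{n,n}-\mathbb{E}Y_{n,n}|>s)\,\mathrm{d}s$, and the one-sided lower tail is handled symmetrically since below the median the hazard rate is still $\ge b$ so $Y_{n,n}$ concentrates at least as fast as the exponential benchmark. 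Carefully bookkeeping these one-sided estimates to land exactly on the constant $2/b^2$ (rather than some larger absolute constant over $b^2$) is the delicate point, and it is where I would spend most of the effort; the rest is routine.
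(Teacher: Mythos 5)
Your first inequality is handled exactly as in the paper (R\'enyi's representation plus Jensen for the concave map $U\circ\exp$), but the route you propose for the second inequality has a genuine gap, and the third inherits it. The tail-integration ``fix'' cannot deliver the stated bound. First, the bound $\mathbb{P}(Y_{n,n}>\mathbb{E}Y_{n,n}+s)\le e^{-bs}$ is not justified: via the union bound it requires $n\overline{F}(\mathbb{E}Y_{n,n})\le 1$, i.e.\ $\mathbb{E}Y_{n,n}\ge U(n)$, which fails in general for non-decreasing hazard rates (take hazard equal to $b$ up to a level $y_0\approx U(n)$ and very large beyond it: then $Y_{n,n}\le y_0$ a.s.\ and $\mathbb{E}Y_{n,n}<U(n)$); centering at a quantile instead forces you to control the quantile--mean gap, which reintroduces the very quantity you are trying to bound. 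Second, even granting that tail bound, $\int_0^\infty 2s\,e^{-bs}\,\mathrm{d}s=2/b^2$, so the upper tail alone already exhausts the whole budget, and since $\operatorname{Var}(Y_{n,n})=\int_0^\infty 2s\,[\mathbb{P}(Y_{n,n}>\mathbb{E}Y_{n,n}+s)+\mathbb{P}(Y_{n,n}<\mathbb{E}Y_{n,n}-s)]\,\mathrm{d}s$, any lower-tail contribution pushes the total strictly above $2/b^2$; no bookkeeping lands on the stated constant. Your Taylor argument for $t\ln t$ (valid when $Y_{n,n}\ge 1$, and in fact giving constant $1/b^2$) only reduces the third inequality to the second, so it is blocked by the same gap.

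For comparison, the paper never integrates tails: it applies the Efron--Stein inequality (Proposition~\ref{prop:phisob:max}) to get $\operatorname{Var}(Y_{n,n})\le\mathbb{E}[(Y_{n,n}-Y_{n-1,n})^2]$, writes the spacing via R\'enyi's representation (Theorem~\ref{prop:renyi:representation}) as $U(\exp(E_{n,n}))-U(\exp(E_{n-1,n}))$, and bounds it by $\frac{1}{b}(E_{n,n}-E_{n-1,n})$ because the derivative of $U\circ\exp$ is the reciprocal hazard rate, at most $1/b$; the increment $E_{n,n}-E_{n-1,n}$ is standard exponential and independent of $E_{n-1,n}$, so its second moment is $2$, giving $2/b^2$. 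The third bound follows from the entropy counterpart in Proposition~\ref{prop:phisob:max} together with $Y_{n-1,n}\ge 1$. Note also that the obstacle you flagged --- ``concavity only gives a one-sided bound'' --- is not a real obstacle: since $0\le (U\circ\exp)'\le 1/b$, the map $U\circ\exp$ is $1/b$-Lipschitz, so your original reduction works two-sidedly without any sandwich, $\operatorname{Var}(Y_{n,n})=\operatorname{Var}(U(\exp(S_n)))\le b^{-2}\operatorname{Var}(S_n)=b^{-2}\sum_{i=1}^n i^{-2}\le \pi^2/(6b^2)<2/b^2$. Had you made that step explicit and then run your $t\ln t$ argument, you would have a correct proof along your own (slightly different) lines; as written, the tail-integration step is the broken piece.
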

 \begin{rem}
   If the hazard rate is strictly increasing, $Y_{n,n}-U(n)$ satisfies
   a law of large numbers \citep[See][]{And70}. We refer to
   \citep{boutho2012} for more results about concentration
   inequalities for order statistics.
 \end{rem}
 \begin{rem}
   The upper bound on $\mathbb{E}[Y_{n,n} ] $ may be tight. For
   example it allows to establish that the expected value of the
   maximum of $n$ independent standard Gaussian random variables is
   less than $\sqrt{2 H_n -\ln H_n -\ln \pi}$ \citep{boutho2012}.
 \end{rem}
The proof of proposition  \ref{prop:bounds:maxim:monotonehazard} relies on a quantile coupling argument and on a sequence 
of computational steps inspired by extreme value theory  \citep{HaaFei06} and concentration of measure
theory \citep{ledoux:2001}. The proof also takes advantage of the R\'enyi representation of order statistics  \citep[See][Chapter 2]{HaaFei06}.
The next theorem rephrases this classical result. 

\begin{thm}\label{prop:renyi:representation}\textsc{(r\'enyi's representation)}
  Let $(X_{1,n},\ldots,X_{n,n})$ denote the order statistics of an independent sample picked according to a distribution function $F$.
Then $(X_{1,n},\ldots,X_{n,n})$ is distributed as $(U(\exp(E_{1,n})),\ldots,U(\exp(E_{n,n})))$ where $U\colon (1,\infty)\rightarrow \mathbb{R}$
is defined by $U(t)= \inf\{ x\colon  F(x)\geq  1-1/t\}$ and $(E_{1,n},\ldots,E_{n,n})$ are the order statistics of an $n$-sample of the 
exponential distribution with scale parameter $1$.  Agreeing on $E_{0,n}=0$,  $(E_{i,n}-E_{i-1,n})_{1\leq i\leq n}$  are independent and exponentially 
distributed with scale parameter $1/(n+1-i)$.   
\end{thm}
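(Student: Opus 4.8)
The plan is to establish the two assertions of the statement separately: first, that $(U(\exp(E_{1,n})),\ldots,U(\exp(E_{n,n})))$ has the law of the order statistics of an i.i.d.\ $F$-sample; second, that the spacings $E_{i,n}-E_{i-1,n}$ are independent and exponentially distributed with scale parameter $1/(n+1-i)$. Both parts are classical; the proof is essentially a quantile-transformation argument followed by an elementary change of variables.

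For the first part I would start from the quantile transformation lemma: if $W$ is uniform on $(0,1)$ and $F^{\leftarrow}(u)=\inf\{x\colon F(x)\geq u\}$ denotes the generalized inverse of $F$, then $F^{\leftarrow}(W)$ has distribution function $F$; this is valid for an arbitrary, possibly discontinuous, $F$ because of the equivalence $F^{\leftarrow}(u)\leq x \iff u\leq F(x)$. Next, note that $U(t)=F^{\leftarrow}(1-1/t)$ and that, for a standard exponential variable $E$, the variable $1-\exp(-E)$ is uniform on $(0,1)$ (a one-line computation of its distribution function), so $U(\exp(E))=F^{\leftarrow}(1-\exp(-E))$ is distributed according to $F$. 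Now take $E_1,\ldots,E_n$ i.i.d.\ standard exponential with order statistics $E_{1,n}\leq\cdots\leq E_{n,n}$; then $X_i:=U(\exp(E_i))$ are i.i.d.\ with distribution $F$. Since $t\mapsto U(\exp(t))$ is non-decreasing, applying it coordinatewise and then sorting yields the same random vector as sorting first and then applying it coordinatewise. Hence, on this common probability space, the order statistics $(X_{1,n},\ldots,X_{n,n})$ of the $F$-sample coincide with $(U(\exp(E_{1,n})),\ldots,U(\exp(E_{n,n})))$, which gives the claimed equality in distribution.

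For the second part I would argue directly with joint densities. The joint density of the exponential order statistics $(E_{1,n},\ldots,E_{n,n})$ on $\{0<e_1<\cdots<e_n\}$ equals $n!\,\exp\!\big(-\sum_{i=1}^{n} e_i\big)$. Introducing the spacings $s_i=e_i-e_{i-1}$ with $e_0=0$, the map $(e_1,\ldots,e_n)\mapsto(s_1,\ldots,s_n)$ is linear with a unit lower-triangular matrix, hence has Jacobian $1$, and it carries the simplex onto the positive orthant $\{s_i>0\}$. Since $e_i=\sum_{j\le i}s_j$, we get $\sum_{i=1}^{n}e_i=\sum_{j=1}^{n}(n+1-j)\,s_j$, so the joint density of $(S_1,\ldots,S_n)$ is $n!\,\exp\!\big(-\sum_{j=1}^{n}(n+1-j)s_j\big)=\prod_{j=1}^{n}(n+1-j)\exp(-(n+1-j)s_j)$, using $n!=\prod_{j=1}^{n}(n+1-j)$. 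The product form shows that the $S_j$ are independent with $S_j\sim \mathrm{Exp}(n+1-j)$, which is exactly the last claim. An alternative for this step is to iterate the memoryless property: $E_{1,n}$ is the minimum of $n$ i.i.d.\ standard exponentials, hence $\mathrm{Exp}(n)$; conditionally on $E_{1,n}$ the $n-1$ excesses are again i.i.d.\ standard exponential, so $E_{2,n}-E_{1,n}\sim \mathrm{Exp}(n-1)$, and so on.

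No step here is a real obstacle. The only points needing care are the treatment of $F^{\leftarrow}$ when $F$ is neither continuous nor strictly increasing (handled by the equivalence $F^{\leftarrow}(u)\le x \iff u\le F(x)$ and by choosing generalized inverses consistently), and the observation that sorting commutes with the non-decreasing map $t\mapsto U(\exp(t))$, which is precisely what allows the identity to hold simultaneously at the level of the i.i.d.\ samples and at the level of their order statistics. Everything else is a routine density computation.
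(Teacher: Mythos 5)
Your proof is correct. Note, however, that the paper does not prove this theorem at all: it states it as a rephrasing of a classical result and simply cites de Haan and Ferreira (Chapter 2), so there is no internal argument to compare with. What you supply is the standard textbook derivation that this citation points to: the generalized-inverse equivalence $F^{\leftarrow}(u)\le x \iff u\le F(x)$ to get $U(\exp(E))\sim F$, the observation that sorting commutes with the non-decreasing map $t\mapsto U(\exp(t))$ (which in fact gives an almost-sure coupling on a common space, stronger than the stated equality in distribution), and the unit-Jacobian change of variables from the exponential order statistics to their spacings, whose product-form density yields independence and the rates $n+1-i$, i.e.\ scale parameters $1/(n+1-i)$ as claimed. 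All steps, including the handling of a possibly discontinuous or flat $F$, are sound, and the memoryless-property alternative you mention is an equally valid shortcut for the spacings part.
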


We will also use the following general relations on moments of  maxima of independent random variables.

\begin{prop}\label{prop:phisob:max}
Let $(Y_{1,n},\ldots,Y_{n,n})$ denote the order statistics of an independent sample picked according to a common probability distribution,
then 
\begin{displaymath}
\EXP [Y_{n,n}^2] \leq (\EXP Y_{n,n})^2 +\EXP \left[ (Y_{n,n}-Y_{n-1,n})^2\right] \, , 
\end{displaymath}
and  if the random variables $(Y_i)_{i\leq n}$  are non-negative, 
\begin{displaymath}
  \EXP\left[ Y_{n,n} \ln  Y_{n,n} \right] \leq \EXP Y_{n,n} \ln  (\EXP Y_{n,n})  + \EXP \left[  \frac{(Y_{n,n}-Y_{n-1,n})^2}{Y_{n-1,n}} \right] 
\, . 
\end{displaymath}
\end{prop}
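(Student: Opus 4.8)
The plan is to prove both inequalities as instances of the sub-additivity (tensorization) of $\phi$-entropies for the function $Z = \max(Y_1,\dots,Y_n) = Y_{n,n}$ of the independent variables $Y_1,\dots,Y_n$, with $\phi(x)=x^2$ in the first case and $\phi(x)=x\ln x$ (on $[0,\infty)$, which is where the non-negativity assumption enters) in the second. Write $\EXP_i$ for integration over $Y_i$ only, the other coordinates being frozen, and for each $i$ set $Z_i=\max_{j\neq i}Y_j$, a $(Y_j)_{j\neq i}$-measurable random variable with $Z_i\leq Z$.

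The only combinatorial input is the observation that the events $A_i=\{Y_i>\max_{j\neq i}Y_j\}$ are pairwise disjoint, that $Z=Z_i$ outside $A_i$, and that on $A_i$ one has $Z=Y_{n,n}$ and $Z_i=Y_{n-1,n}$; hence for any $\psi$ with $\psi(a,a)=0$,
$$\sum_{i=1}^n \psi(Z,Z_i)=\psi(Y_{n,n},Y_{n-1,n})$$
almost surely, both sides vanishing when the maximum is attained more than once. I would then combine this identity with the tensorization inequality and, inside each conditional expectation, with the variational bound for $\phi$-entropy. For $\phi(x)=x^2$ the tensorization of the variance (the Efron--Stein inequality) gives $\mathrm{Var}(Z)\leq\sum_i\EXP[\mathrm{Var}_i(Z)]$, where $\mathrm{Var}_i(Z)$ is the conditional variance of $Z$ given $(Y_j)_{j\neq i}$; bounding $\mathrm{Var}_i(Z)\leq\EXP_i[(Z-Z_i)^2]$ (the conditional variance being the least conditional $L^2$-distance from $Z$ to a $(Y_j)_{j\neq i}$-measurable variable) and using the identity with $\psi(a,b)=(a-b)^2$ yields $\mathrm{Var}(Y_{n,n})\leq\EXP[(Y_{n,n}-Y_{n-1,n})^2]$, which is the first claim. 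For $\phi(x)=x\ln x$ the tensorization of the entropy \citep{ledoux:2001} gives $\mathrm{Ent}(Z)\leq\sum_i\EXP[\mathrm{Ent}_i(Z)]$, with $\mathrm{Ent}(Z)=\EXP[Z\ln Z]-\EXP Z\,\ln\EXP Z$; bounding each conditional entropy by the variational inequality $\mathrm{Ent}_i(Z)\leq\EXP_i[\phi(Z)-\phi(Z_i)-\phi'(Z_i)(Z-Z_i)]=\EXP_i[Z\ln(Z/Z_i)-(Z-Z_i)]$ (valid for any positive $(Y_j)_{j\neq i}$-measurable $Z_i$, since the conditional entropy is the infimum of the right-hand side over such $Z_i$) and applying the identity gives $\mathrm{Ent}(Y_{n,n})\leq\EXP[Y_{n,n}\ln(Y_{n,n}/Y_{n-1,n})-(Y_{n,n}-Y_{n-1,n})]$. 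Finally $\ln u\leq u-1$ with $u=a/b$ gives $a\ln(a/b)-(a-b)\leq (a-b)^2/b$ for $0<b\leq a$, which turns the last bound into $\mathrm{Ent}(Y_{n,n})\leq\EXP[(Y_{n,n}-Y_{n-1,n})^2/Y_{n-1,n}]$, the second claim.

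The only point deserving care is the use of the $\phi$-entropy machinery with $\phi(x)=x\ln x$ near $0$, where $\phi'$ blows up: this forces one to use $Z_i>0$, which holds as soon as at least one frozen coordinate is positive; on the remaining event $\{Y_{n-1,n}=0\}$ the asserted right-hand side is itself infinite, so the inequality is trivially valid there (and in the application to Proposition~\ref{prop:bounds:maxim:monotonehazard} one has $Y_i\geq 1$, so this degeneracy never occurs). Everything else — the two algebraic identities for $\phi(a)-\phi(b)-\phi'(b)(a-b)$ and the estimate $\ln u\leq u-1$ — is routine, and no hazard-rate or extreme-value input is needed: this is a distribution-free statement.
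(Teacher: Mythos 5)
Your proposal is correct and follows essentially the same route as the paper: both bound $\operatorname{Var}(Y_{n,n})$ and $\operatorname{Ent}(Y_{n,n})$ via the sub-additivity (tensorization) of variance and entropy for functions of independent variables, with $Z_i=\max_{j\neq i}Y_j$ so that the sum collapses to the single term involving $Y_{n,n}-Y_{n-1,n}$, and then conclude with $\ln u\leq u-1$ (the paper's $\ln(1+u)\leq u$). Your explicit treatment of the degenerate event $\{Y_{n-1,n}=0\}$ is a minor added care the paper leaves implicit, but the argument is the same.
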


In the proof of this proposition, $\EXP^{(i)}$ denotes conditional expectation with
  respect to $Y_1,\ldots, Y_{i-1}, Y_{i+1}, \ldots, Y_n$ and for each
  $Z_i$ denotes the maximum of $Y_1,\ldots, Y_{i-1}, Y_{i+1}, \ldots,
  Y_n$, that is $Y_{n,n}$ if $Y_i<Y_{n,n}$ and $Y_{n-1,n}$ otherwise.
Order statistics are functions of independent random variables. The next theorem, 
the proof of which can be found in \citep{ledoux:2001} has proved to be 
a powerful tool when investigating the fluctuations of independent random variables
 \citep[See for example][]{efron:stein:1981,steele1986efron,massart:2003,boluma13} . 

\begin{thm}
\label{zefron-stein}
{\sc (sub-additivity of variance and entropy.)} Let $X_1,\ldots,X_n$
be independent random variables and let
 $Z$ be
a square-integrable function of $X=\left(  X_{1},\ldots,X_{n}\right)$. 
For each $1\leq i\leq n$, 
let $Z_i$ be a square-integrable function of $X^{(i)}
=\left(  X_{1},\ldots,X_{i-1},X_{i+1}, X_{n}\right).$
Then
\begin{eqnarray*}
\operatorname{Var}\left(  Z\right) &  \leq &\sum_{i=1}^n\EXP\left[
\left(  Z-\EXP^{(i)} Z  \right)^2\right] \\
& \leq &  \sum_{i=1}^n\EXP\left[
\left(  Z- Z_i  \right)^2\right] \, , 
\end{eqnarray*}
and if $Z$ and all $Z_i, 1\leq i\leq n,$ are positive,
\begin{multline*}
\mathbb{E}\left[ Z \ln(Z)\right]-\mathbb{E}Z \ln(\mathbb{E}Z)\\
\begin{aligned}
  &\leq
    \sum_{i=1}^n \mathbb{E} \left[ \mathbb{E}^{(i)} \left[ Z \ln(Z) \right]- \mathbb{E}^{(i)}Z \ln(\mathbb{E}^{(i)}Z)\right] \\
  & \leq \sum_{i=1}^n \mathbb{E} \left[  Z \ln\left(\frac{Z}{Z_i}\right) -(Z-Z_i)\right] \, . 
\end{aligned}
\end{multline*}
\end{thm}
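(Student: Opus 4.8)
The plan is to establish both chains of inequalities by standard martingale and duality arguments, as in \citet{ledoux:2001}: a Doob decomposition handles the variance, the Donsker--Varadhan variational formula handles the entropy, and in each case the rightmost inequality comes from an elementary one-variable optimisation. Throughout, $\mathbb{E}^{(i)}$ denotes the conditional expectation that integrates out $X_i$ only, so $\mathbb{E}^{(i)}Z$ is a function of $X^{(i)}$; write $\operatorname{Ent}^{(i)}(Z)=\mathbb{E}^{(i)}[Z\ln Z]-(\mathbb{E}^{(i)}Z)\ln(\mathbb{E}^{(i)}Z)$, and set $\mathcal{F}_i=\sigma(X_1,\dots,X_i)$ with $\mathcal{F}_0$ trivial.

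\textbf{Variance.} Write $Z-\mathbb{E}Z=\sum_{i=1}^n\Delta_i$ with $\Delta_i=\mathbb{E}[Z\mid\mathcal{F}_i]-\mathbb{E}[Z\mid\mathcal{F}_{i-1}]$; these are orthogonal martingale increments, so $\operatorname{Var}(Z)=\sum_{i=1}^n\mathbb{E}[\Delta_i^2]$. Because $\mathbb{E}^{(i)}Z$ does not involve $X_i$ and $X_{i+1},\dots,X_n$ are independent of $\mathcal{F}_i$, one has $\mathbb{E}[\mathbb{E}^{(i)}Z\mid\mathcal{F}_i]=\mathbb{E}[\mathbb{E}^{(i)}Z\mid\mathcal{F}_{i-1}]$ (both equal $\mathbb{E}[\mathbb{E}^{(i)}Z\mid X_1,\dots,X_{i-1}]$), so $\Delta_i$ is equally the increment built from $Z-\mathbb{E}^{(i)}Z$; the conditional Jensen inequality then yields $\mathbb{E}[\Delta_i^2]\le\mathbb{E}[(Z-\mathbb{E}^{(i)}Z)^2]$, which is the first inequality. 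For the second, note that for each realisation of $X^{(i)}$ the constant $a=\mathbb{E}^{(i)}Z$ minimises $a\mapsto\mathbb{E}^{(i)}[(Z-a)^2]$; plugging in the $X^{(i)}$-measurable choice $a=Z_i$ and integrating gives $\mathbb{E}[(Z-\mathbb{E}^{(i)}Z)^2]\le\mathbb{E}[(Z-Z_i)^2]$, and summing over $i$ completes the variance part.

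\textbf{Entropy.} I would first record the variational formula: for non-negative integrable $W$,
\[
\operatorname{Ent}(W)=\sup\bigl\{\,\mathbb{E}[WU]\;:\;\mathbb{E}[e^U]\le1\,\bigr\},
\]
a consequence of Young's inequality $ab\le a\ln a-a+e^b$ (take $a=W/\mathbb{E}W$, $b=U$); applied to the conditional law of $X_i$ it gives the conditional identity $\operatorname{Ent}^{(i)}(W)=\sup\{\mathbb{E}^{(i)}[WU]:\mathbb{E}^{(i)}[e^U]\le1\}$. Now fix $U$ with $\mathbb{E}[e^U]=1$. Writing $\mathbb{E}_i=\mathbb{E}[\,\cdot\mid\sigma(X_{i+1},\dots,X_n)]$, so that $\mathbb{E}_0[e^U]=e^U$ and $\mathbb{E}_n[e^U]=1$, set
\[
U_i=\ln\frac{\mathbb{E}_{i-1}[e^U]}{\mathbb{E}_i[e^U]},\qquad\text{so that }\;\sum_{i=1}^nU_i=U.
\]
Since the $X_j$ are independent, integrating $X_i$ out of the function $\mathbb{E}_{i-1}[e^U]$ of $(X_i,\dots,X_n)$ produces $\mathbb{E}_i[e^U]$, and as $\mathbb{E}_i[e^U]$ is $X^{(i)}$-measurable this gives $\mathbb{E}^{(i)}[e^{U_i}]=1$. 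The conditional variational formula then yields $\mathbb{E}^{(i)}[ZU_i]\le\operatorname{Ent}^{(i)}(Z)$, hence $\mathbb{E}[ZU]=\sum_i\mathbb{E}[ZU_i]\le\sum_i\mathbb{E}[\operatorname{Ent}^{(i)}(Z)]$; taking the supremum over admissible $U$ proves the first inequality. For the second, since $\phi(x)=x\ln x$ lies above each of its tangents, for any $X^{(i)}$-measurable $a>0$,
\[
\operatorname{Ent}^{(i)}(Z)=\mathbb{E}^{(i)}[\phi(Z)]-\phi(\mathbb{E}^{(i)}Z)\;\le\;\mathbb{E}^{(i)}\bigl[\phi(Z)-\phi(a)-\phi'(a)(Z-a)\bigr]=\mathbb{E}^{(i)}\Bigl[Z\ln\tfrac{Z}{a}-(Z-a)\Bigr],
\]
the inequality being $\phi(\mathbb{E}^{(i)}Z)\ge\phi(a)+\phi'(a)(\mathbb{E}^{(i)}Z-a)$; taking $a=Z_i$ and summing gives the claim.

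\textbf{Main obstacle.} The delicate step is the entropy tensorization, i.e. the multiplicative telescoping $U=\sum_iU_i$ together with $\mathbb{E}^{(i)}[e^{U_i}]=1$. Unlike the variance case, entropy sub-additivity cannot be read off from a term-by-term Jensen inequality---the two ``remainder'' terms such a split produces point the wrong way---and the needed cancellation is precisely what the ratio $\mathbb{E}_{i-1}[e^U]/\mathbb{E}_i[e^U]$ supplies, independence of the coordinates being what makes $\mathbb{E}^{(i)}$ carry $\mathbb{E}_{i-1}$ to $\mathbb{E}_i$. The remaining points (integrability of the $U_i$, the interchange of sum and expectation, and reduction to the case where the entropies involved are finite) are routine and can be disposed of as in \citet{ledoux:2001}.
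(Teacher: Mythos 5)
Your proof is correct, and it is essentially the argument the paper points to: the paper does not prove this theorem but cites \citet{ledoux:2001}, where the variance bound is obtained exactly via the Doob martingale decomposition plus conditional Jensen and the one-variable quadratic optimisation, and the entropy bound via the duality formula $\operatorname{Ent}(W)=\sup\{\mathbb{E}[WU]:\mathbb{E}[e^U]\le 1\}$ with the multiplicative telescoping $U_i=\ln\bigl(\mathbb{E}_{i-1}[e^U]/\mathbb{E}_i[e^U]\bigr)$ and the tangent-line bound for $x\ln x$. No gaps beyond the routine integrability points you already flag.
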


\begin{proof}[Proof of Proposition~\ref{prop:phisob:max}]  As \begin{math} \EXP [Y_{n,n}^2] = \operatorname{Var}(Y_{n,n}) +
    (\EXP Y_{n,n})^2 \, ,
  \end{math}
  it is enough to bound $\operatorname{Var}(Y_{n,n}) $.
As $Z=Y_{n,n}$ is a function of $n$ independent random variables
  $Y_1,\ldots, Y_n$, choosing the $Z_i$ as $\max(X^{(i)})$,
$Z_i=Z$ except possibly  when $X_i=Z$, and then 
$Z_i=Y_{n-1,n}$.
The  sub-additivity property of the variance
 imply that
  \begin{eqnarray*}
    \operatorname{Var}(Y_{n,n}) 
    & \leq & \EXP \left[ (Y_{n,n}-Y_{n-1,n})^2\right] \, .
  \end{eqnarray*}
Using the sub-additivity of entropy with the convention about $Z_i$,
  \begin{multline*}
    \EXP\left[ Y_{n,n} \ln  Y_{n,n} \right] - \EXP Y_{n,n} \ln  (\EXP Y_{n,n})  \\
    \begin{aligned}
     & \leq   \EXP \left[   Y_{n,n} \ln \frac{ Y_{n,n}}{ Y_{n-1,n}}  -(Y_{n,n}-Y_{n-1,n}) \right] \\
     & \leq   \EXP \left[   Y_{n,n} \ln \left(1+ \frac{ Y_{n,n}-Y_{n-1,n}}{ Y_{n-1,n}} \right) -(Y_{n,n}-Y_{n-1,n}) \right] \\
     & \leq   \EXP \left[  \frac{(Y_{n,n}-Y_{n-1,n})^2}{Y_{n-1,n}} \right]
    \end{aligned}
  \end{multline*}
as $\ln(1+u)\leq u$ for $u>-1$.
\end{proof}

\begin{proof}[Proof of Proposition \ref{prop:bounds:maxim:monotonehazard}]
Thanks to R\'enyi's    representation of order statistics, $\mathbb{E}[Y_{n,n}] = \mathbb{E}[ U(\exp(E_{n,n}))],$  
the proof of the first statement follows from the concavity of $t\mapsto U(\exp(t))$, that is from
Proposition~\ref{prop:analytic:pp:1}. 

By the Efron-Stein inequality (See Proposition~\ref{prop:phisob:max}),
\begin{displaymath}
  \operatorname{Var}( Y_{n,n})  \leq \mathbb{E}\left[ (Y_{n,n}-Y_{n-1,n})^2\right] \, .
\end{displaymath}
Thanks again to R\'enyi's representation, $Y_{n,n}-Y_{n-1,n}$ is distributed like $U(\exp(E_{n,n}))-U(\exp(E_{n-1,n}))$. 
By concavity, this difference is upper-bounded by
\begin{multline*}
U(\exp(E_{n,n}))-U(\exp(E_{n-1,n}))\\ 
\leq \frac{\overline{F}(U(\exp(E_{n-1,n})))}{f(U(\exp(E_{n-1,n})))}(E_{n,n}-E_{n-1,n})\, .
\end{multline*}
The two factors are independent. While $\mathbb{E}[(E_{n,n}-E_{n-1,n})^2]=2$, 
$$\tfrac{\overline{F}(U(\exp(E_{n-1,n})))}{f(U(\exp(E_{n-1,n})))}
\leq \frac{1}{b}.$$

By Proposition~\ref{prop:phisob:max},  
\begin{multline*}
 \mathbb{E} [ Y_{n,n}\ln (Y_{n,n})] \\
 \begin{aligned}
  &\leq  (\mathbb{E} Y_{n,n})\ln (\mathbb{E} Y_{n,n})) + \EXP \left[ \frac{ (Y_{n,n}-Y_{n-1,n})^2}{Y_{n-1,n}}\right] \\
  &\leq  (\mathbb{E} Y_{n,n})\ln (\mathbb{E} Y_{n,n})) +\frac{2}{b^{2}} \, .
 \end{aligned}
\end{multline*}
 \end{proof}

When handling subexponential envelopes classes, Proposition~\ref{prop:bounds:maxim:monotonehazard} provides
a handy way to upper bound the  various statistics that are used to characterize the redundancy of the \textsc{ac}-code.
If the source belongs to $\Lambda(\alpha,\beta,\gamma)$, let $Y_1,\ldots, Y_n$ be identically independently  distributed
according to the probability distribution with tail function $\overline{F}(u)= 1 \wedge \sum_{k>u} f(k)$ where $f(u)= \gamma \exp(-(u/\beta)^\alpha).$
The quantile coupling argument ensures that there exists a probability space with random variables $(X'_1,\ldots,X'_n)$ distributed like 
$(X_1,\ldots,X_n)$ and random variables $(Y'_1,\ldots,Y'_n)$ distributed like 
$(Y_1,\ldots,Y_n)$  and $X'_i\leq Y'_i$ for all $i\leq n$  almost surely. 

Let $Y_{1,n}\leq \ldots \leq Y_{n,n} $  denote again the order
statistics of  a sample $Y_1,\ldots,Y_n$ from the envelope  distribution, 
then for any non-decreasing function $g$, $\EXP [g(M_n)]\leq\EXP [g(Y_{n,n})]$.  
Using (\ref{eq:evalU}) one gets the following.

\begin{prop}\label{prop:bounds:Mn}
Let $X_1,\ldots,X_n$  be independently identically distributed according to $P\in \Lambda^1(\alpha,\beta,\gamma)$,
let $M_n=\max(X_1,\ldots,X_n)$, 
then,  
  \begin{eqnarray*}
    \EXP M_n &\leq & \beta \left( \ln  \left({\kappa\gamma e n}\right)\right)^{1/\alpha}  \, .
 \\
 \EXP [ M_n \log M_n]
& \leq &
 \beta \left( \ln  \left({\kappa\gamma e n}\right)\right)^{1/\alpha} \\
 && \times
\left(\ln \beta + \frac{1}{\alpha}\ln(\ln (\kappa\gamma e n)) \right)  +2\kappa^2\\
    \EXP [M_n^2] 
&\leq & \beta^2 \left( \ln  \left({\kappa\gamma e n}\right)\right)^{2/\alpha}+2\kappa^2.
  \end{eqnarray*}
\end{prop}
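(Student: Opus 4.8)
The plan is to derive Proposition~\ref{prop:bounds:Mn} directly from Proposition~\ref{prop:bounds:maxim:monotonehazard} via the quantile coupling just described, feeding in the two-sided estimate~\eqref{eq:evalU} for $U_c$ of the sub-exponential envelope. First I would fix $P\in\Lambda^1(\alpha,\beta,\gamma)$ and introduce, as in the paragraph preceding the statement, i.i.d.\ variables $Y_1,\dots,Y_n$ with tail $\overline{F}(u)=1\wedge\sum_{k>u}f(k)$, $f(u)=\gamma e^{-(u/\beta)^\alpha}$; the coupling gives $M_n\le Y_{(n)}$ almost surely, so $\EXP[g(M_n)]\le\EXP[g(Y_{(n)})]$ for every non-decreasing $g$ (applied to $g(y)=y$, $g(y)=y^2$, and $g(y)=y\log y$ on $y\ge 1$). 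The point is that the continuous companion $F_c$ of this $F$ has non-decreasing hazard rate (this is the content of the sub-exponential example in Section~\ref{sec:hazard}, and it is exactly why Proposition~\ref{prop:bounds:maxim:monotonehazard} applies), with essential infimum of the hazard rate $b=-\ln\overline{F}_c(l_f)$, and one checks $b\ge \alpha/\beta^\alpha$ up to the harmless constant absorbed in $\kappa$, so that $1/b\le\kappa$ and hence $2/b^2\le 2\kappa^2$.

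Next I would plug the moment bounds of Proposition~\ref{prop:bounds:maxim:monotonehazard} into this coupling. For the first moment: $\EXP M_n\le\EXP Y_{(n)}\le U_c(\exp(H_n))$; since $H_n\le\ln n+1$ and $U_c$ is non-decreasing, $\EXP M_n\le U_c(en)$, and the upper half of~\eqref{eq:evalU} (with $t=en$) gives $U_c(en)\le\beta(\ln(\kappa\gamma e n))^{1/\alpha}-1\le\beta(\ln(\kappa\gamma e n))^{1/\alpha}$, which is the claimed bound. For the second moment: Proposition~\ref{prop:bounds:maxim:monotonehazard} gives $\EXP[Y_{(n)}^2]\le(\EXP Y_{(n)})^2+2/b^2\le\beta^2(\ln(\kappa\gamma e n))^{2/\alpha}+2\kappa^2$, and the coupling transfers this to $M_n$. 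For the $M_n\log M_n$ term I would use $\EXP[Y_{(n)}\log Y_{(n)}]\le(\EXP Y_{(n)})\log(\EXP Y_{(n)})+2/b^2$ — valid since $Y_i\ge l_f\ge 1$ — substitute $\EXP Y_{(n)}\le\beta(\ln(\kappa\gamma e n))^{1/\alpha}$ into the leading term, use monotonicity of $x\mapsto x\log x$ on $x\ge 1$, and expand $\log(\beta(\ln(\kappa\gamma e n))^{1/\alpha})=\log\beta+\tfrac1\alpha\log\ln(\kappa\gamma e n)$; a base-change between $\log$ and $\ln$ (or simply writing the displayed bound with natural logs inside the parenthesis as the statement does) finishes it, with the $2/b^2\le 2\kappa^2$ additive remainder.

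The only genuinely delicate point is the bookkeeping on constants: one must verify that the same $\kappa=1/(1-e^{-\alpha/\beta^\alpha})$ appearing in~\eqref{eq:evalU} simultaneously controls $1/b$ (so that the residual terms come out as $2\kappa^2$) and that the lower endpoint $l_f$ of the envelope distribution is at least $1$ so that the $Y_i\ge 1$ hypothesis of the third bound in Proposition~\ref{prop:bounds:maxim:monotonehazard} holds — both follow from $\sum_{j\ge1}f(j)\ge1$ and $\alpha\ge1$, but they should be stated. Everything else is substitution of~\eqref{eq:evalU} at $t=en$ into the three inequalities of Proposition~\ref{prop:bounds:maxim:monotonehazard}, together with $H_n\le\ln n+1$ and the monotonicity of $U_c$, $x\mapsto x^2$, and $x\mapsto x\log x$. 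I would present the argument in three short displays, one per moment, rather than grinding through the algebra.
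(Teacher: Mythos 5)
Your route is the same one the paper sketches just before the statement: quantile coupling with the envelope distribution, Proposition~\ref{prop:bounds:maxim:monotonehazard} applied to its continuous companion, then (\ref{eq:evalU}) at $t=en$ together with $H_n\le \ln n+1$. The bookkeeping of the leading terms is fine (the $-1$ in (\ref{eq:evalU}) indeed absorbs the $+1$ you silently drop when passing from $U=\lfloor U_c\rfloor+1$ to $U_c$, and your remarks about $l_f\ge 1$ and monotone transfer through the coupling are correct). The genuine gap is the justification of the additive constant: you claim $b\ge\alpha/\beta^\alpha$ ``up to a harmless constant'', hence $1/b\le\kappa$ and $2/b^2\le 2\kappa^2$. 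This is false in general. The infimum $b$ of the hazard rate of the continuous companion is attained on the leftmost interval $[l_f-1,l_f]$ and equals $-\ln\overline{F}(l_f)$, and the definition of $l_f$ only guarantees $\overline{F}(l_f)=\sum_{j>l_f}f(j)<1$, with no quantitative margin. Concretely, take $\alpha=1$, $\beta=10$ (so $\kappa\approx 10.5$) and choose $\gamma>1$ so that $\beta\ln(\kappa\gamma)$ sits just below an integer (e.g.\ $\beta\ln(\kappa\gamma)=23.99$, i.e.\ $\gamma\approx 1.05$): then $\overline{F}(l_f)=e^{-0.001}$, so $b\approx 10^{-3}$ while $\alpha/\beta^\alpha=0.1$ and $1/\kappa\approx 0.095$; the residual $2/b^2\approx 2\cdot 10^{6}$ is nowhere near $2\kappa^2\approx 2\cdot 10^{2}$. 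What is true is only that the hazard rate is at least $\alpha/\beta^\alpha\ \ge 1-e^{-\alpha/\beta^\alpha}=1/\kappa$ on $[l_f,\infty)$ (since $\overline{F}(k+1)\le \kappa f(k+2)\le \kappa e^{-\alpha/\beta^\alpha}f(k+1)$ for $k\ge l_f$), not on the first interval where the infimum lives.

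Consequently, applying Proposition~\ref{prop:bounds:maxim:monotonehazard} as a black box only gives residual terms $2/b^2$, not the $2\kappa^2$ of the statement. To obtain the stated constant you have to reopen the R\'enyi/Efron--Stein step rather than invoke the global infimum: the quantity actually used there is the reciprocal hazard evaluated at $U_c(\exp(E_{n-1,n}))$, i.e.\ at the second order statistic, which is bounded by $\kappa$ on the event $\{U_c(\exp(E_{n-1,n}))\ge l_f\}$; the complementary event (second maximum falling in $[l_f-1,l_f)$, which has small probability) must be handled separately, for instance by bounding $Y_{n,n}-Y_{n-1,n}\le 1+\kappa\,(E_{n,n}-h_c(l_f))_+$ there and using Cauchy--Schwarz, or one must settle for an explicit constant depending on $b$. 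As written, the single step ``$1/b\le\kappa$'' is where your proof fails; everything else is the substitution argument the paper intends.
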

It provides a simple refinement of Lemma 4 from \citep{Bon11}.

\begin{rem}
From Proposition~\ref{prop:bounds:Mn}, it follows that the number of
distinct symbols in $X_{1:n}$ grows at most logarithmically with~$n$.
A simple argument stated at the end of the Appendix shows that, if the
hazard rate is non-decreasing, the 
number of distinct symbols may grow as fast as $U(n)= F^{-1}(1-1/n)$ where
$F$ 
is the envelope distribution. 
This suggests a brute force approach to the analysis of the
redundancy of the \textsc{ac}-code based on the next two informal observations: the redundancy of
Krichevsky-Trofimov mixtures for an alphabet with size $M_n = O_P(\log n)$,
should not be larger than $\EXP \left[ \tfrac{M_n}{2}\right] \log n $;
  the cost of the Elias encoding of records is negligible
with respect to the redundancy of Krichevsky-Trofimov mixture. This leads to a simple estimate of redundancy :
$(1+o_f(1)) \tfrac{U(n)}{2}\log n $ which is always larger than $\int_{1}^n
\tfrac{U(x)}{2x} \mathrm{d}x$, but may be within a  constant of the
optimal bound. Indeed, the cautious  analysis of the \textsc{ac}-code 
pioneered in \citep{Bon11} and pursued here allows us to recover the
exact redundancy rate of the $\textsc{ac}$-code and to establish
asymptotic minimaxity. 
\end{rem}

\subsection{Elias encoding}
\label{sec:elias-encoding}

The average length of the Elias encoding for sources  from a class
with a smoothed  envelope distribution $F$
with non-decreasing hazard rate
is $O(U(n))$ (where $U(t)=F^{-1}(1-1/t)$). It does not grow as fast as the minimax redundancy and 
as far as subexponential   envelope classes are concerned, it 
contributes in  a negligible way to the total redundancy.

\begin{prop}\label{prop:elias:encoding:length}
Let $f$ be an envelope function with associated non-decreasing hazard rate. 
Then,  for all $\PROB \in \Lambda_{f} $, the expected length of the Elias encoding of the sequence 
of record increments amongst the first $n$ symbols is upper-bounded by 
  \begin{eqnarray*}
\EXP \left[  \ell(C_E) \right] 
& \leq &     (2\log(e)+\rho)  (U(\exp(H_n))+1) \, .
  \end{eqnarray*}
where  $\rho$ is a universal constant (which may be chosen as $\rho=2$). 
\\
In general if $X_1,\ldots,X_n$  are independently identically
distributed, letting $M_n=\max(X_1,\ldots,X_n)$, the following holds
\begin{eqnarray*}
  \EXP \left[  \ell(C_E) \right] 
  & \leq & 2  H_n  ( \log(2\EXP [M_n+1])+\rho)  \, .
\end{eqnarray*}
\end{prop}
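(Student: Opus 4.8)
The plan is to bound the length of the Elias penultimate code of the sequence $\widetilde{\mathbf{m}}$ of record increments, terminated by a $1$. Recall that Elias' penultimate code encodes a positive integer $k$ using $\ell_E(k) \leq \log k + 2\log(\log k + 1) + 1 \leq \log k + \rho$ bits for a suitable universal constant (one may take $\rho = 2\log(1+\log k)+1$, and more crudely $\ell_E(k)\le 2\log(k+1)+\rho$ for all $k\ge 1$ with $\rho$ absolute). The total length is $\ell(C_E) = \sum_{i=1}^{n^0_n} \ell_E(\widetilde m_i - \widetilde m_{i-1} + 1) + \ell_E(1)$, where $n^0_n$ is the number of records in $X_{1:n}$ and $\widetilde m_0 = 0$.

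First I would exploit concavity of $x\mapsto 2\log(x+1)+\rho$ (as a function on $[0,\infty)$) to get, by Jensen applied conditionally or by the sub-additivity bound $\sum_i \ell_E(\widetilde m_i - \widetilde m_{i-1}+1) \le N(\log(2\EXP[\dots])+\rho)$ type estimates, control by the number of records $N := n^0_n$ and by the final record value $\widetilde m_N = M_n$. The key combinatorial observations are: (a) the record increments are nonnegative integers summing (after the $+1$ shifts and subtracting the count) telescopically to $M_n$, precisely $\sum_{i=1}^{N}(\widetilde m_i - \widetilde m_{i-1}) = M_n$, so $\sum_{i=1}^N (\widetilde m_i - \widetilde m_{i-1}+1) = M_n + N$; and (b) the number of records $N$ among $n$ i.i.d.\ draws has $\EXP[N] \le H_n$ (the classical record count identity: the $i$-th draw is a record with probability at most $1/i$, with equality for continuous distributions; for discrete ones ties only reduce the count). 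Then, since $k\mapsto 2\log(k+1)$ is concave, Jensen gives $\EXP\big[\sum_{i=1}^N 2\log(\widetilde m_i-\widetilde m_{i-1}+2)\big] \le \EXP[N]\cdot 2\log\!\big(\EXP[(M_n+N)/N] + 1\big)$ after conditioning on $N$ — this is where the second displayed bound, $\EXP[\ell(C_E)] \le 2H_n(\log(2\EXP[M_n+1])+\rho)$, comes from, using $N\ge 1$ so $(M_n+N)/N \le M_n+1$ and absorbing the terminating symbol and lower-order terms into $\rho$.

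For the first, sharper bound specific to non-decreasing hazard rate, the idea is to couple $M_n$ (and hence the $\widetilde m_i$) with the top order statistic $Y_{n,n}$ of an i.i.d.\ sample from the continuous envelope distribution $F_c$, as set up in the paragraph preceding Proposition~\ref{prop:bounds:Mn}, so that $M_n \le Y_{n,n}$ a.s.\ and, since the relevant functions are non-decreasing, $\EXP[\ell(C_E)]$ is controlled by the corresponding expectation under $F_c$. Proposition~\ref{prop:bounds:maxim:monotonehazard} then gives $\EXP[M_n] \le U_c(\exp(H_n))$ (via $U(n)=\lfloor U_c(n)\rfloor+1 \le U_c(n)+1$). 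Feeding this into the generic bound, together with $\EXP[N]\le H_n \le \ln n + 1$ and the slowly-varying growth of $U_c$ (Proposition~\ref{prop:analytic:pp}(i)) which makes $H_n$ and $\log U_c(\exp(H_n))$ negligible relative to $U_c(\exp(H_n))$, collapses the $2H_n(\log(2\EXP[M_n+1])+\rho)$ expression down to $(2\log e + \rho)(U_c(\exp(H_n))+1)$; the factor $\log e$ appears when converting a $\ln$ coming from $H_n = \ln n + O(1)$ and the product $H_n \log(\cdots)$ against the leading $U_c$ term, using $U_c(\exp(H_n)) \ge H_n/\ldots$ type lower bounds implicit in concavity of $U_c\circ\exp$. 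One must be slightly careful that the bound is stated for \emph{all} $\PROB\in\Lambda_f$, not just the envelope itself: this is exactly what the quantile/stochastic-domination coupling delivers, since every marginal in $\Lambda_f$ is dominated by $F$ in the usual stochastic order.

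The main obstacle I expect is the passage from the crude generic bound to the refined leading constant $2\log e + \rho$: one must verify that all the lower-order contributions — the terminating symbol, the $2\log(\log(\cdot)+1)$ correction inside Elias' code, the $\EXP[N]$ factor multiplying $\log\EXP[M_n+1]$, and the discrepancy between $\log$ and $\ln$ — genuinely fold into a term of size $O(\log e \cdot U_c(\exp(H_n)))$ rather than something larger. This hinges on the slow variation of $U_c$ (so that $\log U_c$ and $H_n = \Theta(\log n)$ are both $o(U_c(\exp(H_n)))$ — indeed $U_c(\exp(H_n))$ grows like $U_c(n)$ which by slow variation dominates any fixed power of $\log$, hence dominates $H_n$), and on keeping the constants honest when applying Jensen to the concave $\log$. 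The number-of-records estimate $\EXP[N]\le H_n$ and the telescoping identity $\sum(\widetilde m_i - \widetilde m_{i-1}+1)=M_n+N$ are routine but load-bearing, and I would state them as small lemmas.
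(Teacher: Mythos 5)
Your generic (second) bound is essentially on the right track, but both halves of the proposal have genuine gaps. For the second bound, the step where you pass from $\EXP\bigl[\,n^0_n\,\log(1+M_n/n^0_n)\bigr]$-type quantities to $\EXP[n^0_n]\cdot\log\bigl(2\EXP[M_n+1]\bigr)$ ``by Jensen after conditioning on $N$'' is not justified: the number of records $n^0_n$ and the maximum $M_n$ are \emph{dependent} (both tend to be large together), so you cannot pull the expectation inside the factor $N$ and inside the logarithm simultaneously. This decoupling is exactly the technical content of the paper's argument: it represents $X_i=\lfloor Y_i\rfloor$ for continuous i.i.d.\ $Y_i$, notes that every record of $X_{1:n}$ comes from a record of $Y_{1:n}$ so $n^0_n\log M_n\leq R_n\log\max_i Y_i$, and then uses the classical fact that for a continuous i.i.d.\ sample the number of records $R_n$ (a function of the ranks) is \emph{independent} of $\max_i Y_i$ (a function of the order statistics), with $\EXP R_n=H_n$; only then does Jensen on $\log$ give $H_n\log(2\EXP[M_n+1])$. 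Without this (or some substitute) your factorized bound is unproven.

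The first, sharper bound cannot be obtained the way you propose, namely by ``collapsing'' the generic bound $2H_n(\log(2\EXP[M_n+1])+\rho)$ using slow variation of $U_c$. First, that bound is \emph{larger} than the target: for an envelope with non-decreasing hazard rate one can have $U_c(\exp(H_n))=o(\ln n)$ (e.g.\ sub-exponential envelopes with $\alpha>1$ give $U_c(\exp(H_n))\asymp(\ln n)^{1/\alpha}$, a Poisson envelope gives $\ln n/\ln\ln n$), whereas $2H_n\log(2\EXP[M_n+1])\asymp \ln n\cdot\ln\ln n$; so $H_n$ is \emph{not} negligible relative to $U_c(\exp(H_n))$, and a smaller bound cannot follow from a bigger one. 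Second, the stated bound $(2\log e+\rho)(U_c(\exp(H_n))+1)$ is non-asymptotic, so ``folding lower-order terms into $\rho$'' is not available. The paper's route is much more elementary and pointwise: bound each Elias codeword by $2\log(1+\widetilde m_i-\widetilde m_{i-1})+\rho\leq 2\log(e)(\widetilde m_i-\widetilde m_{i-1})+\rho$ (i.e.\ $\ln(1+x)\leq x$), telescope the increments to get $2\log(e)M_n$, and bound the number of records by $M_n$ itself (each increment is at least $1$), yielding $\ell(C_E)\leq(2\log e+\rho)M_n$ surely; then take expectations and use the R\'enyi-representation/concavity bound $\EXP M_n\leq U(\exp(H_n))\leq U_c(\exp(H_n))+1$ of Proposition~\ref{prop:bounds:maxim:monotonehazard} (your stochastic-domination/coupling remark correctly covers the passage from an arbitrary $\PROB\in\Lambda_f$ to the envelope distribution). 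The linearization $\log(1+\Delta)\leq(\log e)\Delta$ together with $n^0_n\leq M_n$ is the missing idea in your write-up, and it is what produces the constant $2\log e+\rho$.
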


For  classes defined by power law envelopes, $M_n$ grows like a power of $n$, the last upper bound   shows that the length 
of the Elias encoding of records remains polylogarithmic with respect to $n$ while the minimax redundancy grows like a power of $n$
\citep{boucheron:garivier:gassiat:2006}. However the \textsc{ac}-code is not likely to achieve the minimax redundancy over 
classes defined by power-law envelopes. 

The last statement stems from the fact that  the Elias codelength is less than a concave function of the encoded value. 
The average Elias codelength of record differences should not be larger  than the Elias codelength of the average record difference
which is the maximum divided by the number of records. 

\begin{proof}[Proof of Proposition \ref{prop:elias:encoding:length}]
The length of the Elias codewords used to encode the sequence of record differences $\widetilde{\mathbf{m}}$ is readily upper-bounded: 
\begin{eqnarray*}
  \ell(C_E) & \leq &    \sum_{i=1}^{n^0_n}  ( 2 \log \left(1+ \widetilde{m}_i- \widetilde{m}_{i-1}\right) +\rho)\\
& \leq &   \sum_{i=1}^{n^0_n}  2 \log(e) \left( \widetilde{m}_i- \widetilde{m}_{i-1}\right) +\rho n^0_n \\
& \leq &  2\log(e) M_n +\rho n_n^0\\
& \leq &  (2\log(e)+\rho) M_n\, 
\end{eqnarray*}
for some universal  constant $\rho$. The bound on the length of the Elias encoding follows from Proposition \ref{prop:bounds:Mn}.

If we were only interested in subexponential envelope classes, this would be enough. The next lines may be used to establish 
that the length of the Elias encoding remains negligible with respect to minimax redundancy for larger  envelope source classes. 

Using  the arithmetic-geometric mean inequality and $\sum_{i=1}^{n^0_n}   \left( \widetilde{m}_i- \widetilde{m}_{i-1}\right)\leq M_n$, we also have 
\begin{eqnarray*}
  \ell(C_E) & \leq &  2   \sum_{i=1}^{n^0_n}  \log \left( 1+ \widetilde{m}_i- \widetilde{m}_{i-1}\right) + n_n^0 \rho \\
& \leq & 2 n_n^0 \log(1+  M_n/n_n^0) +n_n^0\rho \, . 
\end{eqnarray*} 
The average length of $C_E$ satisfies:
\begin{multline*}
  \EXP \left[ \ell( C_E) \right] \\
  \begin{aligned}
    & \leq \EXP \left[ n_n^0 (2\log( 1+M_n/n_n^0)+\rho)\right]\\
    & \leq  2\left( \EXP \left[ n_n^0 \log( 2M_n)\right]- \EXP n_n^0 (\log \EXP n_n^0 -\rho)\right) \\
    &\qquad \text{by concavity of $x\mapsto -x \log x$}\\
    & \leq 2\left( \Big(\sum_{i=1}^n \frac{1}{i}\Big)\EXP   \log(2 (M_n+1))- \EXP n_n^0 (\log \EXP n_n^0-\rho)\right) \\
    & \leq 2  \ln(en)   ( \log(2\EXP [M_n+1])+\rho) \, . 
  \end{aligned}
\end{multline*}
The penultimate inequality comes from the following observation. 
Any integer valued random variable can be represented as the integer part of a real valued random variable with absolutely continuous 
distribution. For example, $X_1,\ldots,X_n$  may be represented as $\lfloor Y_1\rfloor, \ldots, \lfloor Y_n\rfloor$ where 
$Y_1,\ldots,Y_n$ are i.i.d. and supported by $\cup_{n\in \N_+} [n,n+1/2]$.
Any record in  $X_1,\ldots,X_n$ comes from a record in $Y_1,\ldots, Y_n$ (but the converse may not be true). Letting 
$R_n$  denote the number of records in $Y_1,\ldots, Y_n$, we have  $n_n^0 \log (M_n)\leq R_n \log (\max(Y_1,\ldots,Y_n))$.
Moreover $R_n$ and $\max(Y_1,\ldots,Y_n)$ are independent, and $R_n$ is a sum of independent Bernoulli  random 
variables with parameters $1,1/2, \ldots, 1/n$. This entails
\begin{multline*}
  \EXP [ R_n \log (\max(Y_1,\ldots,Y_n))] \\
  \begin{aligned}
    &= \EXP R_n \EXP [\log(\max(Y_1,\ldots,Y_n))]\\
    &\leq \sum_{i=1}^n \frac{1}{i} \log(2\EXP \max(Y_1,\ldots,Y_n))\\
    &\leq H_n \log (2(\EXP M_n +1)) \, .
  \end{aligned}
\end{multline*}
\end{proof}

\subsection{Adaptive mixture coding}
\label{sec:adpat-mixt-coding}


The next proposition compares the length of the mixture encoding $C_M$ with the ideal codeword length of $\widetilde{X}_{1:n}$.

\begin{prop}
\label{prop:adpat-mixt-coding}
Let $f\colon \mathbb{N}_+\rightarrow[0,1]$ be an envelope with finite and non-decreasing hazard rate.
The expected difference between the mixture encoding of the censored sequence $\widetilde{X}_{1:n}$
and its ideal  codeword length is upper-bounded by 
\begin{eqnarray*}
  {  \EXP \left[\ell(C_M) + \log \PROB (\widetilde{X}_{1:n}) \right]  }
&\leq &  \log(e)
\int_1^n \frac{U(x)}{2x} \mathrm{d}x 
\, (1+o(1))
\end{eqnarray*}
 as $n$ tends to infinity.
\end{prop}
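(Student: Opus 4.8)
\textbf{Proof plan for Proposition~\ref{prop:adpat-mixt-coding}.}

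The plan is to decompose the quantity $\EXP[\ell(C_M) + \log \PROB(\widetilde X_{1:n})]$ into three contributions. Recall that $C_M$ arithmetically encodes $\widetilde x_{1:n}0$ with the Krichevsky--Trofimov-type coding probability $Q^{n+1}$, so that $\ell(C_M) \leq -\log Q^{n+1}(\widetilde X_{1:n}0) + 1$. First I would telescope $\log \PROB(\widetilde X_{1:n})$ against $\log \PROB(X_{1:n})$, accounting for the symbols that have been censored, i.e.\ written
$$
\ell(C_M) + \log \PROB(\widetilde X_{1:n}) \leq \underbrace{\Big(-\log Q^{n+1}(\widetilde X_{1:n}0) + \log \PROB(X_{1:n})\Big)}_{\text{(a)}} + \underbrace{\Big(\log \PROB(\widetilde X_{1:n}) - \log \PROB(X_{1:n})\Big)}_{\text{(b)}} + 1 .
$$
Term (b) is controlled because $\widetilde X_i$ differs from $X_i$ only when $X_i$ is a record, i.e.\ on at most $n^0_n$ indices; using $\PROB^1\{x\}\leq f(x)\leq 1$ and monotonicity, $|\text{(b)}|$ is bounded by something like $\sum_{i} -\log \PROB^1\{X_i\}\IND_{X_i \text{ a record}}$, whose expectation is $O(U_c(n)\log U_c(n))$ — negligible against the target by Proposition~\ref{prop:analytic:pp}(iv), just as $\ell(C_E)$ was.

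For term (a), the core of the argument, I would split it at each step $i$ into the "ideal" loss on the alphabet currently seen plus the cost of the symbols occurring for the first time. Conditionally on $X_{1:i}$, the KT mixture over the alphabet $\{1,\dots,m_i\}$ of size $m_i$ incurs, per classical pointwise redundancy bounds for KT estimators over a finite alphabet \citep{cesa-bianchi:lugosi:2006}, a cumulative redundancy of order $\tfrac{m_i-1}{2}\log\tfrac{i}{m_i-1}$ plus lower-order terms, where $m_i = M_i$ grows. More precisely I would aggregate over $i$ to get a bound of the form
$$
\text{(a)} \leq \sum_{i=1}^{n} \frac{1}{2}\log\!\Big(1 + \frac{2i}{M_i}\Big) \cdot \IND_{\{X_i \text{ is a record}\}} + \frac{M_n}{2}\log e \cdot \big(\text{harmonic-type factor}\big) + \text{l.o.t.},
$$
and then replace the random $M_i$ by its typical value $U_c(i)$ using the maximal inequalities of Proposition~\ref{prop:bounds:maxim:monotonehazard} and Proposition~\ref{prop:bounds:Mn}, together with the concentration bounds $\EXP[M_n^2]\leq (\EXP M_n)^2 + 2/b^2$ and the analogous bound for $\EXP[M_n\log M_n]$, to exchange expectation and the nonlinear functions with only $o(1)$-relative error. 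The main term then becomes $\sum_{i=1}^n \tfrac{1}{2}\log(i/U_c(i))\cdot \PROB\{X_i \text{ record}\}$; since the probability that symbol $i$ is a record is $\Theta(\overline F(m_{i-1}))\approx 1/i$ on average (records at stage $i$ happen with probability $\asymp \overline F_c(U_c(i)) \asymp 1/i$ after summing), this sum is asymptotically $\int_1^n \tfrac{\log(x/U_c(x))}{2x}\,\mathrm dx$, which by the slow variation of $U_c$ (Proposition~\ref{prop:analytic:pp}(i)) and identity \eqref{eq:bizarre} equals $(1+o(1))\int_1^n \tfrac{U_c(x)}{2x}\,\mathrm dx$ up to the $\log e$ conversion factor. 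Here I would lean on \eqref{eq:bizarre}, $\int_1^t \tfrac{U_c(x)}{2x}\mathrm dx = \int_0^{U_c(t)} \tfrac{\ln(t\overline F_c(x))}{2}\mathrm dx$, to recognise the per-record-cost accounting as exactly the metric-entropy integral.

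The hard part will be making the "replace $M_i$ by $U_c(i)$ inside a sum of logarithms" step rigorous uniformly over $\PROB\in\Lambda_f$: one must show the supremum over the envelope class of the redundancy is attained (up to $1+o(1)$) near the envelope distribution itself, so that the deterministic upper bound $\EXP M_i \leq U_c(\exp(H_i))$ and its variance control are tight enough. I expect this to require a careful two-sided argument — an upper bound valid for every source in the class via the coupling $X'_i\leq Y'_i$ with $Y_i$ drawn from the envelope distribution, and a matching contribution showing the bound is not wasteful — plus vigilance that the accumulated lower-order terms ($\ell(C_E)$-type record costs, the $M_n/2$ leading KT term which is $O(U_c(n))$, the $\log e$ base change, and the $+1$ from arithmetic coding) are all $o\big(\int_1^n \tfrac{U_c(x)}{2x}\mathrm dx\big)$, which is where Proposition~\ref{prop:analytic:pp}(iii)--(iv) do the bookkeeping.
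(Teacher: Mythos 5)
Your decomposition into (a) and (b) creates a problem the paper never faces, and the way you dispose of (b) is wrong. Under the natural convention, (b) equals $\sum_{i\,:\,X_i\text{ is a record}} -\log \PROB^1\{X_i\}$, and this is \emph{not} $O(U_c(n)\log U_c(n))$ uniformly over $\Lambda_f$: take the envelope distribution itself (it belongs to $\Lambda_f$) with an exponential envelope ($\alpha=1$). A record at time $i$ has value close to the running maximum $\approx U_c(i)\approx \beta\ln i$, hence ideal codelength $-\ln p(M_i)\asymp \ln i$ nats, and there are $\asymp \ln n$ records, so $\EXP[\mathrm{(b)}]\asymp (\ln n)^2$ --- the same order as the target $\int_1^n \frac{U_c(x)}{2x}\,\mathrm{d}x \asymp \beta(\ln n)^2/4$, and larger when $\beta$ is small; it is certainly not $O(U_c(n)\log U_c(n))=O(\ln n\,\ln\ln n)$. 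So (b) cannot be discarded, and to recover the stated $1+o(1)$ bound you would need $\EXP[\mathrm{(a)}]\le (1+o(1))\int_1^n\frac{U_c(x)}{2x}\mathrm{d}x-\EXP[\mathrm{(b)}]$, a cancellation your sketch does not provide. The paper avoids this entirely: inside this proposition it never introduces $\log\PROB(X_{1:n})$, but compares $\ell(C_M)$ directly to the ideal codelength of the \emph{censored} sequence (the inequality $\log\PROB^n(X_{1:n})\le\log\PROB^n(\widetilde X_{1:n})$ is used only in the favorable direction, in the global decomposition of Section~\ref{sec:main-result}).

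Your treatment of (a) also misidentifies the main term and misses the essential cancellation. The paper (Proposition~\ref{prop:pointwise}) splits the redundancy relative to the censored sequence into (A), the \textsc{kt} redundancy over the final alphabet $\{0,\dots,M_n\}$, whose leading part $\frac{M_n+1}{2}\ln n$ has expectation $\approx \frac{U_c(n)}{2}\ln n$ --- this is \emph{not} lower order: by concavity of $U_c\circ\exp$ it lies between one and two times $\int_1^n\frac{U_c(x)}{2x}\mathrm{d}x$ (ratio $(\alpha+1)/\alpha$ for $\Lambda(\alpha,\beta,\gamma)$) --- and (B), the exactly computable negative saving $-\sum_{i<n}\ln\frac{2i+1+M_n}{2i+1+M_i}$ earned by progressively enlarging the alphabet. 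It is the cancellation of $\frac{M_n}{2}\ln n$ against part of (B) that leaves the true main term $\sum_i \frac{M_i}{2i+1}$, whose expectation is then controlled by the maximal inequalities of Proposition~\ref{prop:bounds:maxim:monotonehazard} and yields $\int_1^n\frac{U_c(x)}{2x}\mathrm{d}x$. Your plan instead treats the ``leading \textsc{kt} term'' as $O(U_c(n))$ (it is $\frac{M_n}{2}\log n$, not $\frac{M_n}{2}$) and proposes as main term $\sum_{\text{records}}\frac12\log(1+2i/M_i)$, which aggregates to roughly $\int_1^n\frac{\log(x/U_c(x))}{2x}\mathrm{d}x\approx\frac{(\log n)^2}{4}$; equation~(\ref{eq:bizarre}) does not identify this with $\int_1^n\frac{U_c(x)}{2x}\mathrm{d}x$ (again check $\alpha=1$: $(\ln n)^2/4$ versus $\beta(\ln n)^2/4$). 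Finally, the ``two-sided/supremum attained near the envelope'' step you anticipate is unnecessary: the paper's argument is a pointwise bound followed by expectation bounds valid for every $\PROB\in\Lambda_f$ via stochastic domination by the envelope; no matching lower bound is needed within this proposition.
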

  
The proof of Proposition \ref{prop:adpat-mixt-coding} is organized 
in two steps. The first step consists in establishing a pointwise 
upper bound on the difference between the ideal codeword length and codeword 
length of the \textsc{ac}-code (Proposition \ref{prop:pointwise} below). This upper-bound 
consists of three summands. 
The expected value of the three summands is then upper-bounded 
under the assumption that the source  belongs to an envelope class with non-decreasing hazard rate.

\begin{prop}\textsc{(pointwise bound)}\label{prop:pointwise}
Let $i_0$ be the random integer defined by: $i_0 =1\vee \lfloor {M_n}/4\rfloor$, 
then, 
  \begin{multline*}
   -\ln Q^n(\widetilde{X}_{1:n})+\ln \PROB^n(\widetilde{X}_{1:n})\\
\leq
\underbrace{\frac{M_n (\ln(M_n)+10)}{2} +\frac{\ln n }{2} }_{\textsc{(a.i)}}
+ \underbrace{\sum_{i=i_0}^{n-1} \left( \frac{M_i}{2i+1}\right)}_{\textsc{(a.ii)}} 
  \end{multline*}
\end{prop}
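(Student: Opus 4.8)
The plan is to bound $-\ln Q^n(\widetilde{X}_{1:n}) + \ln \PROB^n(\widetilde{X}_{1:n})$ pointwise by splitting the sum over indices $i$ into a ``small-alphabet'' regime $i < i_0$, where only finitely many censored symbols have been seen so far, and a ``stable'' regime $i \geq i_0$, where the Krichevsky--Trofimov mixture over an alphabet of size $M_i+1$ behaves like the standard KT redundancy bound. First I would write the log-ratio as a telescoping sum:
\begin{displaymath}
 -\ln Q^n(\widetilde{X}_{1:n})+\ln \PROB^n(\widetilde{X}_{1:n}) = \sum_{i=0}^{n-1} \ln \frac{\PROB(\widetilde{X}_{i+1}\mid X_{1:i})}{Q_{i+1}(\widetilde{X}_{i+1}\mid X_{1:i})} + (\text{terminating-symbol term}),
\end{displaymath}
and observe that, conditionally, $\widetilde{X}_{i+1}$ takes values in $\{0,1,\ldots,M_i\}$, an alphabet of size at most $M_i+1$, so the per-step behaviour is governed by how well the KT estimator $(n_i^j+\tfrac12)/(i+\tfrac{m_i+1}{2})$ approximates the true conditional law on that finite alphabet. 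The terminating-symbol term $-\ln Q_{n+1}(0\mid x_{1:n})$ contributes only $O(\ln n)$, which is absorbed into \textsc{(a.i)}.

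Next I would handle the two regimes. For $i \geq i_0$, the alphabet size $M_i+1$ is at most $4i_0+1 \leq M_n+1$, but more importantly $M_i \leq M_n$ and $i \geq M_n/4 - 1$, so a direct comparison of the KT mixture over the current alphabet with the true memoryless law gives a per-step bound whose dominant contribution, after summing, is $\sum_{i=i_0}^{n-1} \frac{M_i}{2i+1}$, i.e.\ term \textsc{(a.ii)}; this is the classical ``$\tfrac{|\mathcal A|-1}{2}\ln i$''-type accounting done incrementally, using that adding a fresh symbol to the alphabet at step $i$ costs at most the per-step discrepancy recorded here. For the initial segment $i < i_0$, one bounds the log-ratio crudely: each of these at most $i_0 \leq M_n/4$ steps costs at most $O(\ln(M_n))$ (the KT denominator is at least $\tfrac12$ and the alphabet size is bounded by $M_{i_0}+1 = O(M_n)$), so the whole initial block contributes $O(M_n\ln M_n)$, which together with the $O(\ln n)$ from the terminator and a few absolute constants is exactly what is collected into term \textsc{(a.i)} $=\tfrac{M_n(\ln M_n + 10)}{2} + \tfrac{\ln n}{2}$. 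The explicit constant $10$ and the exact split point $i_0 = 1\vee\lfloor M_n/4\rfloor$ are chosen precisely so that the crude initial-block estimate and the refined tail estimate match up; I would verify these constants by tracking the KT redundancy bound $\ln\binom{i+|\mathcal A|-1}{|\mathcal A|-1} + (\text{small})$ and the elementary inequality $\ln\binom{a+b}{b} \leq b\ln(e(a+b)/b)$.

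I expect the main obstacle to be the \emph{dynamic} nature of the alphabet: unlike the standard KT analysis, the alphabet over which the mixture is taken grows as new records appear, and the coding probability assigns mass $\tfrac{1/2}{i+(m_i+1)/2}$ to the event ``new symbol'' while simultaneously pretending, for accounting purposes, that the true symbol $j>m_i$ was this $0$. Reconciling the true probability $\PROB\{\widetilde{X}_{i+1}=0\}=\PROB\{X_{i+1}>m_i\}$ (which can be large when the alphabet is still small) with the coding probability requires care: one must show that the ``cost'' of these new-symbol events is already paid for by the Elias code $C_E$ (handled separately in Proposition~\ref{prop:elias:encoding:length}) and does not double-count here, or else bound the contribution of steps where a record occurs by folding it into the $O(M_n\ln M_n)$ budget of \textsc{(a.i)}. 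Once the bookkeeping of record-steps versus non-record-steps is set up cleanly, the remaining estimates are routine KT-mixture computations.
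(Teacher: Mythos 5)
There is a genuine gap at the heart of your ``stable regime'' step. You propose to bound, pointwise, each term $\ln\bigl(\PROB(\widetilde{X}_{i+1}\mid X_{1:i})/Q_{i+1}(\widetilde{X}_{i+1}\mid X_{1:i})\bigr)$ for $i\geq i_0$ so that the sum is dominated by $\sum_{i\geq i_0} M_i/(2i+1)$. But no such per-step bound holds: at a given step the true conditional probability of the observed symbol can be of order $1$ while its \KT{} estimate $(n_i^j+\tfrac12)/(i+\tfrac{m_i+1}{2})$ is of order $1/i$, so a single step can cost $\ln i$, not $M_i/(2i+1)$. The classical ``$\tfrac{|\mathcal A|-1}{2}\ln n$'' bound is a statement about the whole product (via the mixture/ML structure), and it compares to the maximum-likelihood distribution, not to $\PROB$ step by step; it cannot be ``done incrementally'' without an additional mechanism. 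Nor does the natural fallback work: restarting a fixed-alphabet \KT{} analysis on the block $i\geq i_0$ over $\{0,\dots,M_n\}$ only yields roughly $\tfrac{M_n}{2}\ln(n/i_0)$, which can be far larger than $\sum_{i\geq i_0} M_i/(2i+1)$ when the maximum grows late (e.g.\ $M_i\ll M_n$ for most $i$); and getting $M_i$ rather than $M_n$ inside the sum is exactly the point of the proposition, since it is what later produces the $\int_1^n U_c(x)/(2x)\,\mathrm{d}x$ rate.

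The missing idea is the paper's two-term decomposition through the fixed-alphabet mixture: write the redundancy as $\textsc{(a)}=-\ln \KT_{M_n+1}(\widetilde{X}_{1:n})+\ln\PROB^n(\widetilde{X}_{1:n})$, bounded by $\tfrac{M_n+1}{2}\ln n+2\ln 2$, plus $\textsc{(b)}=-\ln Q^n(\widetilde{X}_{1:n})+\ln \KT_{M_n+1}(\widetilde{X}_{1:n})$, for which one has the exact identity
\begin{displaymath}
\textsc{(b)}=-\sum_{i=1}^{n-1}\ln\Bigl(\frac{2i+1+M_n}{2i+1+M_i}\Bigr)\leq 0 ,
\end{displaymath}
because the adaptive code and $\KT_{M_n+1}$ share the same numerators and differ only in the denominators. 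This negative term is then expanded with $\ln(1+x)\geq x-x^2/2$ restricted to $i\geq i_0$; its linear part cancels the bulk $\tfrac{M_n}{2}\ln n$ of \textsc{(a)}, leaving $\sum_{i\geq i_0} M_i/(2i+1)$, while the quadratic remainder is controlled precisely by the choice $i_0=1\vee\lfloor M_n/4\rfloor$ and absorbed into the $O(M_n)$ slack of \textsc{(a.i)}. Your sketch never identifies this identity or any substitute for it, so the crucial cancellation that produces term \textsc{(a.ii)} is not available; in addition, your crude initial-block and terminating-symbol accounting (the terminator alone costs about $\ln n$, exceeding the $\tfrac{\ln n}{2}$ budget; the paper's statement excludes it) would need to be tightened to match the stated constants.
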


\begin{proof}
  Let $C_M$ be the mixture encoding of $\widetilde{X}_{1:n},$ then $ \ell (C_M) = -\log Q^n(\widetilde{X}_{1:n})$. 
The pointwise redundancy can be decomposed into 
\begin{multline*}
 -\ln Q^n(\widetilde{X}_{1:n})+\ln \PROB^n(\widetilde{X}_{1:n})\\
 \begin{aligned}
  & = \underbrace{- \ln \KT_{M_n+1}(\widetilde{X}_{1:n}) +\ln \PROB^n(\widetilde{X}_{1:n})}_{\textsc{(A)}} \\
  & \underbrace{-\ln Q^n(\widetilde{X}_{1:n}) + \ln \KT_{M_n+1}(\widetilde{X}_{1:n})}_{\textsc{(B)}}
 \end{aligned}
\end{multline*}
where $ \KT_{M_n+1}$ is the Krichevsky-Trofimov mixture coding probability over an alphabet of cardinality $M_{n}+1$.
Summand \textsc{(a)} may be upper bounded thanks to the next bound the proof of which can be found in 
 \citep*{boucheron:garivier:gassiat:2006},
\begin{eqnarray*}
\textsc{(a)} &= &{- \ln (\KT_{M_n+1}(\widetilde{X}_{1:n}) )+\ln (\PROB^n(\widetilde{X}_{1:n})}  ) \\
& \leq&  \frac{M_n+1}{2} \ln( n) + 2 \ln(2)\, .
\end{eqnarray*}
The second summand $\textsc{(B)}$ is negative, this is 
the codelength  the \textsc{ac}-code pockets  by progressively enlarging the alphabet rather 
than using $\{0, \ldots, M_n\}$  as the alphabet. 
\citet[][in the proof of Proposition 4]{Bon11} points out a simple and useful connexion between the coding lengths 
under $Q^n$ and $\KT_{M_n+1}$,
\begin{eqnarray*}
 \textsc{(b)}&= &  -\ln Q^n(\widetilde{X}_{1:n}) + \ln \KT_{M_n+1}(\widetilde{X}_{1:n}) \\
 & = & - \sum_{i=1}^{n-1} \ln \left( \frac{2i+1+M_n}{2i+1+M_i}\right) \, . 
\end{eqnarray*}
The difference between the codelengths can be further upper bounded.
\begin{eqnarray*}
  - \sum_{i=1}^{n-1} \ln \left( \frac{2i+1+M_n}{2i+1+M_i}\right) 
    & = & - \sum_{i=1}^{n-1} \ln \left( 1+ \frac{M_n -M_i}{2i+1+M_i}\right) \\
    &\leq & - \sum_{i=i_0}^{n-1} \left( \frac{M_n -M_i}{2i+1+M_i}\right) \\
    & + & \frac{1}{2} \sum_{i=i_0}^{n-1} \left( \frac{M_n -M_i}{2i+1+M_i}\right)^2 \\
    &&
    \text{as $\ln (1+x) \geq x-x^2/2$  for $ x\geq 0$} \\
    & = & 
    \underbrace{\sum_{i=i_0}^{n-1} \left( \frac{-M_n}{2i+1+M_i}\right)}_{\textsc{(b.i)}} \\
    & + & 
    \underbrace{\sum_{i=i_0}^{n-1} \left( \frac{M_i}{2i+1+M_i}\right)}_{\textsc{(b.ii)}} \\
    & + &
    \underbrace{\frac{1}{2} \sum_{i=i_0}^{n-1} \left( \frac{M_n -M_i}{2i+1+M_i}\right)^2}_{\textsc{(b.iii)}} \, . 
\end{eqnarray*}
The upper bound on \textsc{(a)} can be used to build an upper bound on $\textsc{(a)+(b.i)}$.
\begin{align*}
  &{\textsc{(a)}}+{\textsc{(b.i)}}\\
  &\quad \leq
    {M_n} \left( \frac{ \ln( n)}{2} -\sum_{i=i_0}^{n-1} \ \frac{1}{2i+1+M_i}\right)+\frac{\ln n }{2}\\
  &\quad = M_n \Bigg( \sum_{i=i_0}^{n-1} \left( \frac{1}{2i} -\frac{1}{2i+1+M_i} \right)\\
  &\quad \phantom{\frac{M_n}{\ln(2)} \Bigg( } + \frac{\ln( n)}{2} -\sum_{i=i_0}^{n-1} \frac{1}{2i}\Bigg) +\frac{\ln n }{2}\\
  &\quad \leq {M_n}\left( \sum_{i=i_0}^{n-1} \frac{M_i+1}{(2i+1+M_i)(2i)} + \frac{H_{i_0}}{2}+ \frac{1}{2n} \right) +\frac{\ln n}{2}\\
  &\quad \leq {M_n} \sum_{i=i_0}^{n-1} \frac{M_i+1}{(2i+1)(2i)} + \frac{M_n (\ln(M_n)+2)}{2} +\frac{\ln n }{2}\, . 
\end{align*}
Adding \textsc{(b.iii)} to the first summand in the last expression, 
\begin{multline*}
  {M_n} \sum_{i=i_0}^{n-1} \frac{M_i+1}{(2i+1)(2i)} + \textsc{(b.iii)} \\
  \begin{aligned}
    & \leq {M_n} \sum_{i=i_0}^{n-1} \frac{M_i}{(2i+1)^2(2i)} + {M_n} \sum_{i=i_0}^{n-1} \frac{1}{(2i+1)(2i)} \\
    & + \frac{1}{2}\sum_{i=i_0}^{n-1} \frac{M_n^2+M_i^2}{(2i+1)^2} \\
    & \leq M_n^2 \sum_{i\geq i_0}\left( \frac{1}{2i(2i+1)^2}+\frac{1}{(2i+1)^2}\right) +  \frac{M_n}{2i_0} \\
    & \leq M_n \left( \frac{M_n}{2i_0}+\frac{1}{2i_0}\right)\\ &\leq 4  M_n   \, .
  \end{aligned}
\end{multline*}
\end{proof}

\begin{proof}[Proof of Proposition~\ref{prop:adpat-mixt-coding}]
The average redundancy of the mixture code is thus upper bounded by
\begin{displaymath}
\log(e)\Bigg(  \mathbb{E} \Big[\underbrace{\frac{M_n (\ln(M_n)+10)}{2} +\frac{\ln n }{2} }_{\textsc{(a.i)}}\Big] + 
\mathbb{E} \Big[\underbrace{\sum_{i=i_0}^{n-1} \left( \frac{M_i}{2i+1}\right)}_{\textsc{(a.ii)}} \Big]\Bigg)
\end{displaymath}
We may now use the maximal inequalities  from Proposition~\ref{prop:bounds:maxim:monotonehazard}. 
\begin{eqnarray*}
  \sum_{i=1}^{n-1}  \frac{\EXP M_i}{2i+1}
 & \leq &    \sum_{i=1}^{n-1}  \frac{ U(\exp(H_i))+1}{2i+1}     \\
 & \leq &    \sum_{i=1}^{n-1}  \frac{ U(e i)+1}{2i+1} \\
 & \leq &   \int_1^n   \frac{ U(e x)}{2x}  \mathrm{d}x + \frac{U(e)}{3} + \frac{\ln (n)}{2} 
\, .
\end{eqnarray*}
Meanwhile, letting $b$ be the infimum of the hazard rate of the envelope, 
\begin{eqnarray*}
 \lefteqn{  \mathbb{E} \Big[{\frac{M_n (\ln(M_n)+10)}{2} +\frac{\ln n }{2} }\Big] }\\
& \leq & \frac{(U(en)+1)(\ln(U(en)+1)+10)}{2} + \frac{2}{b^2} +\frac{\ln n }{2}. \\
\end{eqnarray*}
Now using Propositions \ref{prop:analytic:pp:1} and \ref{prop:analytic:pp:2} and the fact that $U$ tends to infinity at infinity one gets that
$$
\ln n + U(n)\ln U(n)
=o\left(\int_1^n   \frac{ U(e x)}{2x}  \mathrm{d}x\right)
$$
as $n$ tends to infinity and the result follows.

\end{proof}


\appendix 
\subsection{Haussler-Opper lower bound}
\label{sec:haussler-opper-lower}

\begin{proof}[Proof of Theorem \ref{thm:haussler:opper:lower}]
  The proof of the Haussler-Opper lower bound consists of ingenuously using 
Fubini's theorem and Jensen's inequality.

Throughout this proof, we think of $\Lambda^1$ as a measurable parameter space
denoted by $\Theta$ endowed with a probability distribution
$\pi$, $\theta,\theta^*,\tilde{\theta},\widehat{\theta}$ denote random
elements of $\Theta$ picked according to the prior $\pi$. Each
$P_\theta$ define an element of $\Lambda^n$. The model is assumed to
be dominated and for each $\theta \in \Theta$, $\mathrm{d}P_\theta^n$ denotes the density of
$P_\theta^n$ with respect to the dominating probability. 
In this paragraph $\int_{\mathcal{X}^n}  \ldots$ should be understood 
as integration with to the dominating probability. 

Recall  that the Bayes redundancy under prior $\pi$ satisfies
\begin{multline*}
\lefteqn{\EXP_{\pi_1} \left[ D\left( P^n_{\theta^*} , \EXP_{\pi_2}
    P^n_{\tilde{\theta}}\right)\right]}\\
 = 
   -  \int_\Theta \mathrm{d}\pi(\theta^*) \int_{\mathcal{X}^n} \mathrm{d}{P^n_{\theta^*}(x_{1:n})} 
\log {\int_\Theta \mathrm{d}\pi(\tilde{\theta}) \frac{\mathrm{d}P^n_{\tilde{\theta}} (x_{1:n})}{\mathrm{d}P^n_{{\theta}^*} (x_{1:n})}}
\end{multline*}
where $\theta^*$ (resp. $\tilde{\theta}$) is distributed according to
$\pi_1=\pi$ (resp. $\pi_2=\pi$). 
The next equation 
\begin{displaymath}
  \int_\Theta \mathrm{d}\pi(\theta^*) \int_{\mathcal{X}^n} \mathrm{d}{P^n_{\theta^*}(x_{1:n})} 
\left( \frac{\int_\Theta \mathrm{d}\pi(\tilde{\theta}) \frac{\mathrm{d}P^n_{\tilde{\theta}} (x_{1:n})}{\mathrm{d}P^n_{{\theta}^*} (x_{1:n})}}{\int_\Theta \mathrm{d}\pi(\widehat{\theta})\sqrt{\frac{\mathrm{d}P^n_{\widehat{\theta}} (x_{1:n})}{\mathrm{d}P^n_{{\theta}^*} (x_{1:n})}}} \right)= 1
\end{displaymath}
is established by repeated invokation of Fubini's Theorem as follows:
\begin{align*}
 &\int_\Theta \mathrm{d}\pi(\theta^*) \int_{\mathcal{X}^n} \mathrm{d}{P^n_{\theta^*}(x_{1:n})} 
 \left( \frac{\int_\Theta \mathrm{d}\pi(\tilde{\theta})
  \frac{\mathrm{d}P^n_{\tilde{\theta}}
    (x_{1:n})}{\mathrm{d}P^n_{{\theta}^*} (x_{1:n})}}{\int_\Theta
  \mathrm{d}\pi(\widehat{\theta})\sqrt{\frac{\mathrm{d}P^n_{\widehat{\theta}}
    (x_{1:n})}{\mathrm{d}P^n_{{\theta}^*} (x_{1:n})}}} \right)\\
  & = \int_\Theta \mathrm{d}\pi(\theta^*) \int_{\mathcal{X}^n} \sqrt{\mathrm{d}{P^n_{\theta^*}(x_{1:n})} }
  \left( \frac{\int_\Theta \mathrm{d}\pi(\tilde{\theta})
    {\mathrm{d}P^n_{\tilde{\theta}}
    (x_{1:n})}}{\int_\Theta
      \mathrm{d}\pi(\widehat{\theta})\sqrt{{\mathrm{d}P^n_{\widehat{\theta}}
      (x_{1:n})}}} \right)\\
  &= \int_{\mathcal{X}^n} \int_\Theta \mathrm{d}\pi(\theta^*)\sqrt{\mathrm{d}{P^n_{\theta^*}(x_{1:n})} }
  \left( \frac{\int_\Theta \mathrm{d}\pi(\tilde{\theta})
    {\mathrm{d}P^n_{\tilde{\theta}}
    (x_{1:n})}}{\int_\Theta
    \mathrm{d}\pi(\widehat{\theta})\sqrt{{\mathrm{d}P^n_{\widehat{\theta}}
    (x_{1:n})}}} \right)\\ 
  & = \int_{\mathcal{X}^n} 
  \left( {\int_\Theta \mathrm{d}\pi(\tilde{\theta})
    {\mathrm{d}P^n_{\tilde{\theta}}
    (x_{1:n})}} \right)\\
  & = {\int_\Theta \mathrm{d}\pi(\tilde{\theta}) \int_{\mathcal{X}^n}
    {\mathrm{d}P^n_{\tilde{\theta}}
    (x_{1:n})}} \\
  &= 1 \, .
\end{align*}
Starting from the previous equation, 
using twice  the Jensen inequality and the convexity of $-\log$ leads to
\begin{align*}
  & -  \int_\Theta \mathrm{d}\pi(\theta^*) \int_{\mathcal{X}^n} \mathrm{d}{P^n_{\theta^*}(x_{1:n})} 
  \log {\int_\Theta \mathrm{d}\pi(\tilde{\theta}) \frac{\mathrm{d}P^n_{\tilde{\theta}} (x_{1:n})}{\mathrm{d}P^n_{{\theta}^*} (x_{1:n})}} \\
  & \geq
    -  \int_\Theta \mathrm{d}\pi(\theta^*) \int_{\mathcal{X}^n} \mathrm{d}{P^n_{\theta^*}(x_{1:n})} 
    \log  {\int_\Theta \mathrm{d}\pi(\widehat{\theta})\sqrt{\frac{\mathrm{d}P^n_{\widehat{\theta}} (x_{1:n})}{\mathrm{d}P^n_{{\theta}^*} (x_{1:n})}}} 
    \\
  & \geq
    -  \int_\Theta \mathrm{d}\pi(\theta^*) \log   \int_{\mathcal{X}^n} \mathrm{d}{P^n_{\theta^*}(x_{1:n})} 
    {\int_\Theta
      \mathrm{d}\pi(\widehat{\theta})\sqrt{\frac{\mathrm{d}P^n_{\widehat{\theta}}
      (X_{1:n})}
    {\mathrm{d}P^n_{{\theta}^*} (X_{1:n})}}} \, .
\end{align*}
In the sequel, $\alpha_H(P_{\widehat{\theta}},P_{{\theta}^*})$
is a shorthand for the Hellinger affinity between
$P_{\widehat{\theta}}$ and $P_{{\theta}^*}$, recall that 
\begin{eqnarray*}
\alpha_H(P_{\widehat{\theta}},P_{{\theta}^*})^n &=& \alpha_H(P^n_{\widehat{\theta}},P^n_{{\theta}^*})\\
&=&\int_{\mathcal{X}^n} \sqrt{{\mathrm{d}P^n_{\widehat{\theta}}
    (x_{1:n})}{\mathrm{d}P^n_{{\theta}^*} (x_{1:n})}}
\end{eqnarray*}
and that
\begin{eqnarray*}
\alpha_H(P_{\widehat{\theta}},P_{{\theta}^*}) &= & 1-
d^2_H(P_{\widehat{\theta}},P_{{\theta}^*}) \\
&\leq &\exp \left( -d^2_H(P_{\widehat{\theta}},P_{{\theta}^*})\right)
\, .
\end{eqnarray*}
By Fubini's Theorem again, 
\begin{align*}
  &-  \int_\Theta \mathrm{d}\pi(\theta^*) \log   \int_{\mathcal{X}^n} \mathrm{d}{P^n_{\theta^*}(x_{1:n})} 
  {\int_\Theta
    \mathrm{d}\pi(\widehat{\theta})\sqrt{\frac{\mathrm{d}P^n_{\widehat{\theta}}
	(X_{1:n})}{\mathrm{d}P^n_{{\theta}^*} (X_{1:n})}}}
  \\
  & \geq
    -  \int_\Theta \mathrm{d}\pi(\theta^*) \log    
    {\int_\Theta \mathrm{d}\pi(\widehat{\theta}) \int_{\mathcal{X}^n} \sqrt{{\mathrm{d}P^n_{\widehat{\theta}} (X_{1:n})}{\mathrm{d}P^n_{{\theta}^*} (X_{1:n})}}} \\
  & = -  \int_\Theta \mathrm{d}\pi(\theta^*) \log    
    {\int_\Theta \mathrm{d}\pi(\widehat{\theta}) \alpha_H(P_{\widehat{\theta}},P_{{\theta}^*})^n}\\
  & \geq -  \int_\Theta \mathrm{d}\pi(\theta^*) \log    
    {\int_\Theta \mathrm{d}\pi(\widehat{\theta}) \exp\left( -n
      \frac{d^2_H(P_{\widehat{\theta}},P_{{\theta}^*})}{2}\right)}\, . 
\end{align*}
The right hand side can written as
\begin{displaymath}
  \EXP_{\pi_1} \left[ -\log \EXP_{\pi_2} \exp\left( -n
    \frac{d^2_H(P_1,P_2)}{2}\right)\right]
\end{displaymath}
where $P_1$ (resp. $P_2$) is distributed according to $\pi_1=\pi$
(resp. $\pi_2=\pi$). The proof is terminated by recalling that, 
whatever the prior,  the
minimax redundancy is not smaller than the Bayes redundancy. 
\end{proof}


\subsection{Proof of Proposition \ref{prop:entropy:hazard:rate} }
\label{sec:entropies-}
In order to alleviate notation ${\cal H}_{\epsilon} $ is used as a shorthand for $ {\cal H}_{\epsilon}(\Lambda^1_f)$.  
Upper and lower bounds for $\mathcal{H}_\epsilon $ follow by adapting the  ``flat concentration argument''
  in \cite{Bon11}. The cardinality $\mathcal{D}_\epsilon$ of the smallest partition of $\Lambda^1_f$
into subsets of diameter less than $\epsilon$ is not larger than 
the smallest cardinality of a covering by Hellinger balls of radius smaller than $\epsilon/2$. Recall that  $\Lambda^1_f$
endowed with the Hellinger distance may be considered as a subset of $\ell_2^{\mathbb{N}_+}$:
\begin{eqnarray*}
C &=&   \Big\{ (x_i)_{i>0} \colon \sum_{i>0} x_i^2= 1\Big\}\\
&&
\bigcap  \Big\{ (x_i)_{i>0} \colon \forall {i>0},  0\leq x_i \leq \sqrt{f(i)}\Big\} \, . 
\end{eqnarray*}
 Let $N_\epsilon= U(\tfrac{16}{\epsilon^2})$ ($N_\epsilon$ is 
the $1-\epsilon^2/16$ quantile of the smoothed envelop distribution). 
Let $D$ be the projection  of $C$ on the subspace generated by the first $N_\epsilon$ vectors of the 
canonical basis. Any element of $C$ is at distance at most $\epsilon/4$ of 
$D$. Any $\epsilon/4$-cover for $D$ is an $\epsilon/2$-cover for $C.$
Now $D$ is included in the intersection of  the unit ball of a $N_\epsilon$-dimensional Euclidian 
space and of an hyper-rectangle $\prod_{i=1}^{N_\epsilon} [0,\sqrt{f(k)}]$. 
An $\epsilon/4$-cover for $D$ can be extracted from any maximal $\epsilon/4$-packing of points from $D$.
From such a maximal packing, a collection of pairwise disjoint balls of radius $\epsilon/8$ can be extracted that 
fits into  $\epsilon/8$-blowup of $D$.
Let $B_m$ be the $m$-dimensional Euclidean unit ball ($\text{Vol}(B_m)=  \Gamma(1/2)^m/\Gamma(m/2+1)$
with $\Gamma(1/2)= \sqrt{\pi}$).   
By   volume comparison,  
\begin{displaymath}
\mathcal{D}_\epsilon \times  (\epsilon/8)^{N(\epsilon)} \text{Vol}(B_{N_\epsilon})\leq \prod_{i=1}^{N_\epsilon}   \Big(\sqrt{f(k)}+\epsilon/4\Big) \, , 
\end{displaymath} 
or 
$$
 {\cal H}_{\epsilon}\leq \sum_{k=1}^{N_\epsilon} \ln  \Big(\sqrt{f(k)}+\epsilon/4\Big) -\ln \text{Vol}(B_{N_\epsilon}) + N_\epsilon \ln \frac{8}{\epsilon}
$$
Let $l= U(1)$ ($l=l_f +1$). For $k\geq l,$ $f(k)= \overline{F}(k-1)(1-\overline{F}(k)/\overline{F}(k-1))$.
As the hazard rate of the envelope distribution is assumed to be non-decreasing, denoting the essential infimum  of the hazard 
rate by $b$, 
$\overline{F}(k-1) (1-e^{-b})\leq f(k)\leq \overline{F}(k-1)$. Hence, for $l\leq k\leq N_\epsilon$, $\sqrt{f(k)}\geq  \epsilon/4 \sqrt{1-e^{-b}}.$ Thus
\begin{align}
 {\cal H}_{\epsilon} &\leq \sum_{k=1}^{l_f} \ln  \Big(\sqrt{f(k)}+\epsilon/4\Big)+\sum_{k=l}^{N_\epsilon} \ln (\sqrt{f(k)})\notag \\
  &\quad -\ln \text{Vol}(B_{N_\epsilon})+ \frac{N_\epsilon -l_f}{\sqrt{1-e^{-b}}} + N_\epsilon \ln \frac{8}{\epsilon} \notag \\
  &\begin{aligned}
    &\leq  \sum_{k=l}^{N_\epsilon} \frac{1}{2}\ln \left(\frac{64\overline{F}(k-1)}{\epsilon^2}\right) -\ln \text{Vol}(B_{N_\epsilon}) \\
    &\quad + \frac{N_\epsilon -l_f}{\sqrt{1-e^{-b}}} + l_f \ln \frac{8}{\epsilon} + \sum_{k=1}^{l_f} \ln  \Big(\sqrt{f(k)}+\epsilon/4\Big).
   \end{aligned}\label{eq:entropy-upper}
\end{align}

Following \cite{Bon11}, a lower bound is derived by another volume comparison argument. 
From any partition into sets of diameter smaller than $\epsilon$, one can extract 
a  covering by balls of radius~$\epsilon$.  
Then  for any positive integer $m$,
\begin{eqnarray*}
 \mathcal{D}_{\epsilon} &\geq & \frac{\prod_{k=l}^{l_f+m} \sqrt{f(k)}}{\epsilon^m\text{Vol}(B_m)} \, . 
\end{eqnarray*}
Hence, choosing $m=N_\epsilon-l_f$
\begin{align}
   {\cal H}_{\epsilon} &\geq \sum_{k=l}^{N_\epsilon}  \ln  \sqrt{f(k)}  -\ln \text{Vol}(B_{N_\epsilon-l_f}) + (N_\epsilon-l_f) \ln \frac{1}{\epsilon} \notag \\
& \geq \sum_{k=l}^{N_\epsilon}  \frac{1}{2}\ln  \left( \frac{{\overline{F}(k-1)}(1-e^{-b})}{\epsilon^2}\right)  -\ln \text{Vol}(B_{N_\epsilon-l_f}) \label{eq:entropy-lower}
\end{align}
Now, 
\begin{eqnarray*}
\ln \text{Vol}(B_{N_\epsilon})&=&\left[ N_{\epsilon} \ln N_{\epsilon}\right] (1+o(1))\\
&=&\left[U\left(\frac{16}{\epsilon^2}\right)\ln U\left(\frac{16}{\epsilon^2}\right) \right](1+o(1))
\end{eqnarray*}
as $\epsilon$ tends to $0$. Since $N_{\epsilon}\rightarrow \infty$, we have also $\ln \text{Vol}(B_{N_\epsilon-l_f})= \left[ N_{\epsilon} \ln N_{\epsilon}\right] (1+o(1))$, as $\epsilon$ tends to $0$. \\
Now, the term 
$\sum_{k=l}^{N_\epsilon}  \frac{1}{2}\ln  \left( \frac{\overline{F}(k-1)}{\epsilon^2}\right)$
n (\ref{eq:entropy-upper}) and (\ref{eq:entropy-lower}) is treated by (\ref{eq:bizarre}). The desired result follows from the fact that $U$ and hence $U \ln (U)$ are slowly varying (Proposition \ref{prop:analytic:pp:1}) and from Proposition \ref{prop:analytic:pp:2}.



\subsection{Proof of equation (\ref{eq:bizarre})}
\label{sec:proof-equation-ref}
Performing the change of variable $y=U(x)$ ($x=1/\overline{F}(y), \tfrac{\mathrm{d}x}{\mathrm{d}y}= \tfrac{F'(y)}{(\overline{F}(y))^2}$),
\begin{eqnarray*}
   \int_1^t \frac{U(x)}{2x} \mathrm{d} x  & = & \int_{l_f-1}^{U(t)} \frac{y F'(y)}{2\overline{F}(y)} \mathrm{d}y \\
& = & \left[-\frac{y}{2} \ln (\overline{F}(y)) \right]^{U(t)}_{l_f-1} + \int_{l_f-1}^{U(t)} \frac{\ln(\overline{F}(y))}{2} \mathrm{d}y \\
& = & \frac{U(t)}{2} \ln (t) + \int_{0}^{U(t)} \frac{\ln(\overline{F}(y))}{2} \mathrm{d}y \\
& = & \int_{0}^{U(t)} \frac{\ln(t \overline{F}(x))}{2} \mathrm{d}x ,
\end{eqnarray*}
where the second equation follows by integration by parts. 

\subsection{The number of distinct symbols in a geometric sample}
\label{sec:numb-dist-symb}

The number of distinct symbols in a geometric sample has been
thoroughly investigated using analytic techniques \citep[See][and
related papers]{ArKnPr06}. The next elementary
argument from \citep{BHA13} shows that the number of distinct symbols in a geometric
sample  is on average of the same order of magnitude as the maximum. 
This is in sharp constrast with what is known for samples from
power-law distributions, See  \citep{GneHanPit07} for a general
survey, \citep{ohannessian2012rare} and \citep{MR1765619} for statistical applications.

Assume that $X_1,\ldots,X_n$ are independently, identically
distributed according to a geometric distribution with parameter $q\in
(0,1)$, that is $\PROB\{ X_1>k\}=(1-q)^k$ for $k\in \mathbb{N}$. Let 
$K_n$ denote the number of distinct symbols in $X_1,\ldots,X_n$ and
$M_n=\max(X_1,\ldots,X_n)$.
\begin{displaymath}
\EXP M_n \geq  \EXP K_n \geq \EXP M_n  - \frac{1-q}{q^2} \, . 
\end{displaymath}
\begin{proof}
Let $X_{1,n}\leq X_{2,n}\leq \ldots\leq X_{n,n}$ denote the
non-decreasing rearrangement of $X_1,\ldots,X_n$. We agree on
$X_{0,n}=0$. The difference 
$M_n-K_n$ is upper-bounded by the sum of the gaps in the non-decreasing
rearrangement:
\begin{displaymath}
  M_n -K_n \leq \sum_{k=1}^{n} (X_{k,n}  - X_{k-1,n}-1)_+
      \, . 
\end{displaymath}
The expected value of $  (X_{k,n}  - X_{k-1,n}-1)_+ $ can be readily
upper-bounded using R\'enyi's representation (Theorem
\ref{prop:renyi:representation}).  
The order statistics $X_{1,n}\leq X_{2,n}\leq \ldots\leq X_{n,n}$
are distributed like $\lfloor Y_{k,n}/\ln(1/(1-q))\rfloor$ where 
$Y_{1,n}\leq Y_{2,n} \leq \ldots \leq Y_{n,n}$ are the order
statistics of a standard exponential sample. For $j\in \mathbb{N}_+$, the event 
$\lfloor Y_{k,n}/\ln(1/(1-q))\rfloor>j +\lfloor
Y_{k-1,n}/\ln(1/(1-q))\rfloor$ is included in the event 
$ Y_{k,n} >j\ln(1/(1-q)) +Y_{k-1,n}$. As $(n-k+1)(Y_{k,n}-Y_{k-1,n})$ is
exponentially distributed, the last event has probability 
$(1-q)^{j(n-k+1)}$.
\begin{align*}
  &\EXP [ M_n -K_n]\\
  &\quad \leq \sum_{k=1}^{n} \EXP [(X_{k,n}  -
      X_{k-1,n}-1)_+]\\
  &\quad \leq \sum_{k=1}^{n} \sum_{j\in \mathbb{N}} \mathbb{P}\left\{ (X_{k,n}  -
      X_{k-1,n}-1)_+ > j\right\}\\
  &\quad \leq \sum_{k=1}^{n} \sum_{j\in \mathbb{N}_+} (1-q)^{j(n-k+1)}\\
  &\quad \leq \sum_{k=1}^{n} \frac{(1-q)^{k}}{1-(1-q)^k}\\
  &\quad \leq \sum_{k=1}^{n} \frac{(1-q)^{k}}{q} \, . 
\end{align*}
\end{proof}

As $M_n$ is concentrated around $\ln n / \ln (1/(1-q))$, this simple 
argument reveals that for geometrically distributed samples, the
number of distinct symbols is close to the largest value encountered
in the sample.  

This observation can be extended to the setting 
where the sampling distribution has finite  non-decreasing hazard rate. 
\begin{prop}
  Assume that $X_1,\ldots,X_n$ are independently, identically
distributed according to a distribution with finite non-decreasing hazard
rate over $\mathbb{N}\setminus \{0\}$. Let 
$K_n$ denote the number of distinct symbols in $X_1,\ldots,X_n$ and
$M_n=\max(X_1,\ldots,X_n)$. Then there exists a constant $\kappa$ that 
may depend on the sampling distribution but not on sample size $n$
such that 
\begin{displaymath}
\EXP M_n \geq  \EXP K_n \geq \EXP M_n  - \kappa \, . 
\end{displaymath}

\end{prop}

\subsection*{Acknowledgment}
The authors wish to thank M. 
Thomas, M. 
Ohannessian and the referees for many insightful suggestions. 


\end{document}